\numberwithin{equation}{section}                		
\numberwithin{equation}{section}
\DeclareSymbolFont{cyrletters}{OT2}{wncyr}{m}{n}
\DeclareMathSymbol{\Sha}{\mathalpha}{cyrletters}{"58}
\DeclareMathOperator{\Gal}{\rm{Gal}}
\newcommand{\Q}[1][]{\mathbb{Q}}
\newcommand{\Z}[1][]{\mathbb{Z}}
\newcommand{\F}[1][]{\mathbb{F}}
\newcommand{\C}[1][]{\mathbb{C}}
\newcommand{\Frob}[1][]{\mathrm{Frob}}
\newcommand{\ch}[1][]{\chi_{\cyc}}
\newcommand{\cyc}[1][]{\mathrm{cyc}}
\newcommand{\Sel}[1][]{\mathrm{Sel}}
\newcommand{\Ker}[1][]{\mathrm{Ker}}
\newcommand{\Res}[1][]{\mathrm{Res}}
\newcommand{\Hom}[1][]{\mathrm{Hom}}
\newcommand{\cl}[1][]{\mathrm{Cl}}
\newcommand{\ur}[1][]{\mathrm{ur}}
\newcommand{\A}[1][]{\mathcal{A}}
\newcommand{\BK}[1][]{\mathrm{BK}}
\newtheorem{thm}{Theorem}[section]
\newtheorem*{thm*}{Theorem}
\newtheorem{lem}[thm]{Lemma}
\newtheorem{prop}[thm]{Proposition}
\newtheorem{defn}[thm]{Definition}
\newtheorem{rem}[thm]{Remark}
\newtheorem{que}[thm]{Question}
\newcommand{\GL}{\mathrm{GL}}
\title[]{Ideal class groups of number fields associated to modular Galois representations}
\author{Naoto Dainobu}
\subjclass{11R29 (Primary), 11F11, 11F80 (Secondary)}
\keywords{ideal class groups, Galois representations, modular forms, Bloch-Kato's Selmer groups}
\address{Department of Mathematics \\
3-14-1 Hiyoshi, Kohoku-ku, Yokohama-shi, Kanagawa 223-8522 Japan}
\email{vicarious@keio.jp}
\begin{document}
\maketitle
\begin{abstract}
Let $p$ be an odd prime number and $f$ a modular form. We consider the $\F_p$-valued Galois representation $\bar{\rho}_f$ attached to $f$ and its twist $\bar{\rho}_{f, D}$ by the quadratic character $\chi_D$ corresponding to a quadratic discriminant $D$. We define $K_{f, D}$ to be the field corresponding to the kernel of $\bar{\rho}_{f, D}$. In this article, we investigate the ideal class group $\cl(K_{f, D})$ of the number field $K_{f, D}$ as a $\Gal(K_{f, D}/\Q)$-module. We give a condition which implies the existence of a $\Gal(K_{f, D}/\Q)$-equivariant surjective homomorphism from $\cl(K_{f, D})\otimes \F_p$ to the representation space $M_{f, D}$ of $\bar{\rho}_{f, D}$, using Bloch and Kato's Selmer group of $\bar{\rho}_{f, D}$. We also give some numerical examples where we have such surjections by calculating the central value of the $L$-function of $f$ twisted by $\chi_D$ under Bloch and Kato's conjecture. Our main result in this paper is a partial generalization of the previous result of Prasad and Shekhar on elliptic curves to higher weight modular forms.
\end{abstract}
\section{Introduction and main result}
Galois representations attached to modular forms are used as fundamental tools in several places of modern number theory. For example, they play important roles in the proof of the Iwasawa main conjectures by Mazur-Wiles \cite{BA} and by Skinner-Urban \cite{CE}. The associated number fields to such Galois representations are also important objects in number theory. For instance, in \cite[Section 6.4]{EC}, Bosman computed the images of Galois representations attached to modular forms explicitly, investigating their associated number fields.

In this article, our interest is in the ideal class groups of such important number fields. Let us explain slightly more details here. Let $p$ be an odd prime number and $f$ a modular form with certain conditions we explain later. Then we can attach an $\F_p$-valued Galois representation $\bar{\rho}_f : G_{\Q} \rightarrow  \GL_2(\F_p)$, where $G_{\Q}$ denotes the absolute Galois group $\Gal(\overline{\Q}/\Q)$ for a fixed algebraic closure $\overline{\Q}$ of $\Q$. We also consider its twist $$\bar{\rho}_{f, D} : G_{\Q} \rightarrow \mathrm{Aut}_{\F_p}(M_{f, D})\simeq \GL_2(\F_p)$$ by the  quadratic character $\chi_D$ corresponding to a quadratic discriminant $D$. Here, we define $M_{f, D}$ to be the representation space of $\bar{\rho}_{f, D}$. Then we can define a number field $K_{f, D}$ as the fixed field of $\Ker(\bar{\rho}_{f, D})$ in $\overline{\Q}$. We study the ideal class group $\cl(K_{f, D})$ of $K_{f, D}$. In this situation, the Galois group $\Gal(K_{f, D}/\Q)$ acts naturally on $\cl(K_{f, D})$, so we investigate it as a $\Gal(K_{f, D}/\Q)$-module. More precisely, we are interested in the following question.
\begin{que}\label{?}
For an arbitrary $\F_p$-valued irreducible representation $M$ of $\Gal(K_{f, D}/\Q)$, when does a $\Gal(K_{f, D}/\Q)$-equivariant surjective homomorphism from $\cl(K_{f, D})\otimes\F_p$ to $M$ exist? 
\end{que}
This question is important for us to understand the structure of $\cl(K_{f, D})$ as a $\Gal(K_{f, D}/\Q)$-module. In fact, the existence of such a surjection implies that $M$ appears in the semi-simplification $(\cl(K_{f, D})\otimes\F_p)^{\mathrm{ss}}$ of $\cl(K_{f, D})\otimes\F_p$. The author hopes that Question \ref{?} will lead us to a non-commutative analogue of Herbrand and Ribet's theorem in \cite{He, Ri} for $K_{f, D}/\Q$.

As the first step toward Question \ref{?}, we deal with the case $M=M_{f, D}$ in this paper. Our main result gives an answer to Question \ref{?} for $M=M_{f, D}$ in terms of Bloch and Kato's Selmer group $H^1_f(\Q, M_{f, D})$ of $M_{f, D}$. 
\begin{thm*}[Main result]\label{0-main}
Under the conditions {\rm (A)}, {\rm (B)}, {\rm (C)} and {\rm (D)} in Subsection {\rm 2.3}, there exists a $\Gal(K_{f, D}/\Q)$-equivariant surjective homomorphism from $\cl(K_{f, D})\otimes \F_p$ to $M_{f, D}$ if the inequality $\mathrm{dim}_{\F_p}( H_f^1(\Q, M_{f, D})) \geqslant 2$ holds.
\end{thm*}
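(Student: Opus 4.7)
My plan is to translate the desired surjection into an existence statement for an everywhere-unramified Galois cohomology class with values in $M_{f, D}$, and then to extract such a class from the Bloch--Kato Selmer group using the dimension hypothesis.

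The first step is to apply class field theory for $K_{f, D}$, which realizes $\cl(K_{f, D})\otimes\F_p$ as $\Gal(H/K_{f, D})$, where $H$ is the maximal unramified abelian extension of $K_{f, D}$ of exponent $p$, together with its natural $\Gal(K_{f, D}/\Q)$-action. Invoking the inflation--restriction exact sequence for $1 \to G_{K_{f, D}} \to G_\Q \to \Gal(K_{f, D}/\Q) \to 1$ with coefficients in $M_{f, D}$, and granted the vanishing of $H^1(\Gal(K_{f, D}/\Q), M_{f, D})$ --- which should follow from the big-image/irreducibility parts of the assumptions {\rm (A)}--{\rm (D)} --- I can identify the subspace $H^1_{\ur}(\Q, M_{f, D}) \subseteq H^1(\Q, M_{f, D})$ of classes unramified at every finite prime with
\[
\Hom_{\Gal(K_{f, D}/\Q)}\bigl(\cl(K_{f, D})\otimes\F_p,\ M_{f, D}\bigr).
\]
Because $M_{f, D}$ is irreducible as a $\Gal(K_{f, D}/\Q)$-module, any non-zero element of this Hom-space is automatically surjective, so the theorem reduces to producing a non-zero everywhere-unramified cohomology class.

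The second step compares $H^1_{\ur}(\Q, M_{f, D})$ with the Bloch--Kato Selmer group. By construction, classes in $H^1_f(\Q, M_{f, D})$ are already unramified at every prime $\ell \neq p$, so the everywhere-unramified subspace coincides with the kernel of the natural local-at-$p$ map
\[
H^1_f(\Q, M_{f, D}) \longrightarrow H^1_f(\Q_p, M_{f, D})\big/H^1_{\ur}(\Q_p, M_{f, D}).
\]
A local analysis at $p$, using the shape of $M_{f, D}$ as a $G_{\Q_p}$-module provided by the conditions on $f$ at $p$ (crystalline or ordinary, as encoded in {\rm (A)}--{\rm (D)}), should show that this quotient has $\F_p$-dimension at most $1$. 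Combined with $\dim_{\F_p} H^1_f(\Q, M_{f, D}) \geq 2$, this guarantees a non-zero class in the kernel and therefore the desired surjection.

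The step I expect to be the main obstacle is the local computation bounding the dimension of $H^1_f(\Q_p, M_{f, D})/H^1_{\ur}(\Q_p, M_{f, D})$. Matching the mod-$p$ Bloch--Kato condition with the unramified condition requires an explicit description of $M_{f, D}|_{G_{\Q_p}}$ together with a dimension count via local Tate duality and local Euler characteristics; in the non-ordinary case it will rely on Fontaine--Laffaille type input to interpret $H^1_f$ at $p$ in residual characteristic. Secondary technical points are the vanishing of $H^1(\Gal(K_{f, D}/\Q), M_{f, D})$ --- standard from the image of $\bar{\rho}_{f, D}$ under the listed hypotheses when $p$ is odd --- and the compatibility of the $\Gal(K_{f, D}/\Q)$-module structures under the inflation--restriction identification in the first step.
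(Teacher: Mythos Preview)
Your overall architecture matches the paper's exactly: reduce via inflation--restriction and class field theory to showing $H^1_{\ur}(\Q, M_{f,D})\neq 0$, then extract an unramified class from $H^1_f(\Q, M_{f,D})$ by bounding the obstruction at $p$ to dimension at most $1$.

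There is one genuine gap. You assert that ``by construction, classes in $H^1_f(\Q, M_{f, D})$ are already unramified at every prime $\ell \neq p$.'' This is false with the paper's definitions. For $\ell\neq p$ the local condition is $H^1_f(\Q_\ell, M_{f,D}):= i^{-1}\bigl(\pi(H^1_{\ur}(\Q_\ell, V_{f,D}))\bigr)$, and for $\ell\mid N$ this can strictly contain $H^1_{\ur}(\Q_\ell, M_{f,D})$. The paper spends Section~4 on exactly this point: condition~(D) (triviality of the Tamagawa factors $c(\Q_\ell, A_{f,D})$ at $\ell\mid N$) is what forces $H^1_f(\Q_\ell, M_{f,D})=H^1_{\ur}(\Q_\ell, M_{f,D})$ at the bad primes (Propositions~4.2 and~4.5). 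At $\ell\nmid Np$ the equality is indeed automatic, but you must invoke~(D) at the bad primes; otherwise the kernel you write down need not lie in $H^1_{\ur}(\Q, M_{f,D})$. This is the sole place~(D) enters, and your plan currently does not account for it.

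Two smaller remarks on the local step at $p$. First, the paper does not bound the quotient $H^1_f(\Q_p,M_{f,D})/H^1_{\ur}(\Q_p,M_{f,D})$ directly; it bounds $\dim_{\F_p} H^1_f(\Q_p,M_{f,D})$ itself by $1$ under~(A) and~(B), via the exact sequence $0\to A_{f,D}^{G_{\Q_p}}\otimes\F_p\to H^1_f(\Q_p,M_{f,D})\to H^1_f(\Q_p,A_{f,D})[p]\to 0$, the equality $\dim_{\F_p} H^1_f(\Q_p,A_{f,D})[p]=\dim_{\Q_p} H^1_f(\Q_p,V_{f,D})=1$ (Bloch--Kato's formula plus the Hodge--Tate weights), and the vanishing $M_{f,D}^{G_{\Q_p}}=0$. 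Second, in the supersingular case no Fontaine--Laffaille input is needed: the Fontaine--Edixhoven theorem gives irreducibility of $M_{f,D}|_{G_{\Q_p}}$ for $k\le p+1$, whence $M_{f,D}^{G_{\Q_p}}=0$ immediately. The delicate case-by-case work is entirely in the ordinary case, where~(B) excludes the configurations in which $M_{f,D}^{G_{\Q_p}}\neq 0$.
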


We describe our main result in a precise form as Theorem \ref{main} in Subsection 2.3 again. \\

Using our main result, we can study the ideal class group $\cl(K_{f, D})$ via the size of the Bloch-Kato Selmer group $H_f^1(\Q, M_{f, D})$. Assuming Bloch and Kato's conjecture, we can examine when the inequality $\mathrm{dim}_{\F_p}( H_f^1(\Q, M_{f, D})) \geqslant 2$ holds by calculating a special value of the $L$-function $L(f, \chi_D, s)$ of $f$ twisted by $\chi_D$. In Section 6, we do such calculations and give some examples of our result (see Propositions \ref{exam1} and \ref{exam2}). 

Since $K_{f, D}$ is actually a non-abelian extension over $\Q$, it is difficult in general to study $\cl(K_{f, D})$ as a $\Gal(K_{f, D}/\Q)$-module directly even with computer algebra systems. Therefore, the author hopes our result, which clarifies a part of the structure of $\cl(K_{f, D})$ as a $\Gal(K_{f, D}/\Q)$-module, will be an important tool in the topic in this paper.

Here, we give several known results on the ideal class group $\cl(K_{f, D})$. Suppose $f$ is a rational cusp form of weight $2$.  Then the Galois representation $M_{f, D}$ is isomorphic to the group of the $p$-torsion points $E[p]$ of $E$, where $E$ is the quadratic twist of the elliptic curve corresponding to $f$ by $D$. Hence, the number field $K_{f, D}$ is the same as $\Q(E[p])$ which is generated by the coordinates of the points in $E[p]$ over $\Q$. In \cite{PS}, Prasad and Shekhar studied the ideal class group $\cl(\Q(E[p]))$ as a $\Gal(\Q(E[p])/\Q)$-module using the $p$-Selmer group $\Sel(\Q, E[p])$ of $E$. One of their results \cite[Theorem 3.1]{PS} is that if $\mathrm{dim}_{\F_p}(\Sel(\Q, E[p]))\geqslant 2$, then there exists a $\Gal(\Q(E[p])/\Q)$-equivariant surjection $\cl(\Q(E[p]))\otimes \F_p \twoheadrightarrow E[p]$ under similar assumptions to ours. Actually, the motivation of our work in this paper was to generalize this result of Prasad and Shekhar to higher weight modular forms.

The order of $\cl(K_{f, D})$ has been also studied by many people. For example, some lower bounds for the $p$-adic valuation of $\#\cl(\Q(E[p]))$ were given by Sairaiji-Yamauchi \cite{SY} and Hiranouchi \cite{H} for an elliptic curve $E$. Ohshita \cite{Oh} further generalized their result for number fields associated to  more general Galois representations including our $\bar{\rho}_{f, D}$, and gave a lower bound in \cite[Theorem 3.4]{Oh} similar to \cite{SY}, \cite{H}. On the other hand, we can also give a lower bound for the $p$-adic valuation of $\#\cl(K_{f, D})$ from our main result. In fact, if we have a surjective homomorphism $$\cl(K_{f, D})\otimes \F_p \twoheadrightarrow M_{f, D},$$ then we obtain especially $p^2\mid \#\cl(K_{f, D})$ since $M_{f, D}$ is $2$-dimensional over $\F_p$. This lower bound is the same as Ohshita's given in \cite{Oh}. 
\begin{rem}\label{sym}
In our previous work \cite{D}, we generalized the result of Prasad and Shekhar \cite{PS} to a different direction from that of this paper. For an elliptic curve $E$ over $\Q$, we gave in \cite{D} a condition which implies the existence of a surjective $\Gal(\Q(E[p])/\Q)$-equivariant homomorphism $$\cl(\Q(E[p]))\otimes\F_p\twoheadrightarrow \mathrm{Sym}^i E[p] 
\quad (1\leqslant i \leqslant p-2).$$ Here $\mathrm{Sym}^i E[p]$ denotes the $i$-th symmetric power of $E[p]$ and $i=1$ is the case Prasad and Shekhar studied. Thus, combining the method of this paper and \cite{D}, we can give a condition which implies the existence of a surjective $\Gal(K_{f, D}/\Q)$-equivariant homomorphism $$\cl(K_{f, D})\otimes\F_p\twoheadrightarrow \mathrm{Sym}^i M_{f, D},$$ and an answer to Question \ref{?} for $M=\mathrm{Sym}^i M_{f, D}$ (see Remark \ref{symst}).
\end{rem}

At the end of this section, we explain the outline of this paper. In Section 2, we first introduce notation and recall the definition of the Bloch-Kato Selmer groups and Tate-Shafarevich groups for $p$-adic representations. After that, we state our main result in Theorem \ref{main} again in a precise form and write a sketch of the proof dividing it into Step 1, Step 2 and Step 3. In Section 3, we prove Step 1. In Section 4, we introduce basic notions of the Tamagawa factor and prove Step 2. In Section 5, we prove Step 3 and complete the proof of Theorem \ref{main}. Finally in Section 6, we introduce some numerical examples.

\section{Preliminaries and a sketch of the proof of the main result}
\subsection{Basic notation}\label{notation}
First, we summarize our notations for various Galois representations we use in this paper. 

We fix $p$ as an odd prime number. Let $f(z)=\sum_{n=0}^{\infty} a_n q^n$ be a normalized Hecke eigen new cuspform of even weight $k\geqslant 2$ and level $\Gamma_0(N)$. Here $q:=e^{2\pi\sqrt{-1}z}$ and the parameter $z$ is in the complex upper half plane. We define a number field $\Q(f):=\Q(a_1, a_2, \cdots)$ generated by the Fourier coefficients $\{a_1, a_2, \cdots\}$ of $f$ over $\Q$. We assume the following two conditions hold.
\begin{itemize}
\item There exists a prime ideal $\mathfrak{p}$ of $\Q(f)$ above $p$ such that the completion of $\Q(f)$ at $\mathfrak{p}$ is isomorphic to $\Q_p$.
\item $\Q(f)$ is totally real.
\end{itemize}
By the first assumption and the result of Eichler \cite{Ei}, Shimura \cite{Sh} and Deligne \cite{De}, we can take the $p$-adic Galois representation $\rho^0_f : G_{\Q} \rightarrow \mathrm{Aut}_{\Q_p}(V^0_f)$ attached to $f$ and $\mathfrak{p}$, where $V^0_f$ denotes its representation space. Note that we take $V^0_{f}$ whose Hodge-Tate weights are $\{0, k-1\}$ as a $G_{\Q_p}$-module. For a fixed Galois stable $\mathbb{Z}_p$-lattice $T^0_f$ of $V^0_f$, we put 
\[
A^0_f := V^0_f/T^0_f\cong (\Q_p/\Z_p)^{\oplus 2},\ \ M^0_f:=T^0_f/pT^0_f\cong \F_p^{\oplus 2}
\]
 and $$\bar{\rho}^0_f:G_{\Q} \rightarrow \mathrm{Aut}_{\F_p}(M^0_f)\cong\mathrm{GL}_2(\F_p).$$ 
 
 Let $\chi_{\cyc} : G_{\Q} \rightarrow \Z_p^{\times}$ and $\omega_{\cyc} : G_{\Q} \rightarrow \F_p^{\times}$ denote the $p$-adic and mod $p$ cyclotomic character, respectively. We define 
  \[
V_f:=V_f^0(\chi_{\cyc}^{1-\frac{k}{2}}),\ \ A_f:=A_f^0(\chi_{\cyc}^{1-\frac{k}{2}}),\ \ M_f:=M_f^0(\omega_{\cyc}^{1-\frac{k}{2}}),
\vspace{-1mm}\]
and put 
\begin{gather*}
\rho_f:=\rho^0_f\otimes \chi_{\cyc}^{1-\frac{k}{2}} : G_{\Q} \rightarrow \mathrm{Aut}_{\Q_p}(V_f),\\
\bar{\rho}_f := \bar{\rho}^0_f \otimes \omega_{\cyc}^{1-\frac{k}{2}} : G_{\Q} \rightarrow \mathrm{Aut}_{\F_p}(M_f).
\end{gather*}
Hereafter, for a Galois representation $W$ and a character $\chi$ of $G_{\Q}$, $W(\chi)$ denotes the twist of $W$ by $\chi$. Note that we have a $G_{\Q}$-equivariant isomorphism
\begin{eqnarray}\label{selfdual}
V_f \simeq V_f^{\ast} (\chi_{\cyc}),
\end{eqnarray}
by the assumption that $\Q(f)$ is totally real, where $V_f^{\ast}$ denotes the $\Q_p$-linear dual $\Hom_{\Q_p}(V_f, \Q_p)$ of $V_f$.

We consider various quadratic twists of the above Galois representations. Putting $D$ as a quadratic discriminant or 1, we take $\chi_D$ the corresponding quadratic character or the trivial character of $G_{\Q}$, respectively. Then we put \vspace{-1mm}
\[
V_{f, D}:=V_f(\chi_{D}),\ \ A_{f, D}:=A_f(\chi_D),\ \ M_{f, D}:=M_f(\chi_D),
\vspace{-1mm}
\]
and write 
\begin{gather*}
\rho_{f, D}:=\rho_{f} \otimes \chi_{D} : G_{\Q} \rightarrow \mathrm{Aut}_{\Q_p}(V_{f, D}),\\
\bar{\rho}_{f, D}:=\bar{\rho}_f \otimes \chi_{D} : G_{\Q} \rightarrow \mathrm{Aut}_{\F_p}(M_{f, D}).
\end{gather*} 

Finally, as we explained in Section 1, we define a number field $K_{f, D}$ as the fixed field of $\Ker(\bar{\rho}_{f, D})$ in $\overline{\Q}$.

\subsection{The Bloch-Kato Selmer groups and Tate-Shafarevich groups}

Next, we recall the Bloch-Kato Selmer groups and Tate-Shafarevich groups for $p$-adic representations. For more details, see \cite[Section 5]{BK}.

For a number field or a local field $F$, $G_F$ denotes its absolute Galois group $\Gal (\overline{F}/F)$ for a fixed algebraic closure $\overline{F}$. Let $L/F$ be a Galois extension of number fields or local fields, and $N$ a $\Gal(L/F)$-module. We abbreviate the Galois cohomology group $H^i(\Gal(L/F),N)$ to $H^i(L/F, N)$. Moreover, if $L=\overline{F}$, then we abbreviate $H^i(L/F,N)$ to $H^i(F,N)$.  We define the unramified cohomology group $H_{\ur}^i(F, N)$ as a subgroup of cohomology classes in $H^i(F, N)$ which are trivial when restricted to the inertia subgroup at every place of $F$.

Let $V$ be a $p$-adic representation of $G_{\Q}$. We take Bloch and Kato's local condition $H_f^1(\Q_{\ell}, V)$ in $H^1(\Q_{\ell}, V)$ at every prime number $\ell$ as 
\[
   \begin{cases}
    H_f^1(\Q_{\ell}, V):=  H_{\ur}^1(\Q_{\ell}, V)=\Ker\left(H^1(\Q_{\ell}, V)\rightarrow H^1(\Q^{\ur}_{\ell}, V)\right)   
    & (\ell \neq p), \\
    H_f^1(\Q_p, V):= \Ker\left(H^1(\Q_p, V)\rightarrow H^1(\Q_p, V \otimes\mathbf{B}_{\mathrm{crys}})\right) & (\ell=p).
  \end{cases}
\]
Here $\Q^{\ur}_{\ell}$ is the maximal unramified extension of $\Q_{\ell}$, and $\mathbf{B}_{\mathrm{crys}}$ denotes Fontaine's crystalline period ring which is defined in \cite[Section 1]{BK}. Put $T$ as a Galois stable $\Z_p$-lattice in $V$, $A:=V/T$ and $M:=T/pT$. Then we have exact sequences 
\begin{eqnarray}\label{TVA}
0\rightarrow T\xrightarrow{\iota} V \xrightarrow{\pi} A \rightarrow 0,
\end{eqnarray}
\begin{eqnarray}\label{pAA}
0 \rightarrow M \xrightarrow{i} A \xrightarrow{\times p} A \rightarrow 0.
\end{eqnarray}  
By $\pi$ in (\ref{TVA}), we have the induced homomorphism $$\pi : H^1(\Q_{\ell}, V) \rightarrow H^1(\Q_{\ell}, A).$$  Similarly, we also have the induced homomorphism $$i: H^1(\Q_{\ell}, M) \rightarrow H^1(\Q_{\ell}, A)$$ by $i$ in (\ref{pAA}). We define local conditions at $\ell$ with coefficients in $A$ and $M$ as 
\[
H^1_f(\Q_{\ell}, A):=\pi(H^1_f(\Q_{\ell}, V)),\ \ H^1_f(\Q_{\ell}, M):=i^{-1}(H^1_f(\Q_{\ell}, A))
\]
respectively. 

\begin{defn}\label{Sel}
Let $W$ be one of $V$, $A$ and $M$. The Bloch-Kato Selmer group of $W$ is defined as
\[
H^1_f(\Q, W) := \Ker\left(H^1(\Q, W) \xrightarrow{\prod \mathrm{Loc}_{\ell}} \prod_{\ell} \frac{H^1(\Q_{\ell}, W)}{H^1_f(\Q_{\ell}, W)} \right).
\]
Here, $\mathrm{Loc}_{\ell}$ denotes the restriction map $H^1(\Q, W)\rightarrow H^1(\Q_{\ell}, W)$ for every prime number $\ell$ and the product runs over all prime numbers.
\end{defn}

We note a relationship between the Selmer groups $H^1_f(\Q, A)$ and $H^1_f(\Q, M)$. From (\ref{pAA}), we have an exact sequence
\begin{eqnarray}\label{modploc}
0 \rightarrow A^{G_{\Q}}\otimes\F_p\rightarrow H^1(\Q, M) \xrightarrow{i} H^1(\Q, A)[p] \rightarrow 0, 
\end{eqnarray}
where the above map $i$ is the induced one by $i : M \rightarrow A$ in (\ref{pAA}). Then we can see that $$H^1_f(\Q, M)=i^{-1}(H_f^1(\Q, A)[p]).$$

\begin{defn}\label{Shafa}
The $p$-part of the Bloch-Kato Tate-Shafarevich group for $A$ is defined as 
\[
\Sha^{\BK}(\Q, A):=\frac{H^1_f(\Q, A)}{\pi (H^1_f(\Q, V))},
\]
where $$\pi: H^1 (\Q, V) \rightarrow H^1 (\Q, A)$$ is induced by the map $\pi: V \rightarrow A$. 
\end{defn}

By definition, $\Sha^{\BK}(\Q, A)$ sits in the following exact sequence
\begin{eqnarray}\label{seltosha}
0 \rightarrow \pi (H^1_f(\Q, V)) \rightarrow H^1_f(\Q, A) \rightarrow \Sha^{\BK}(\Q, A) \rightarrow 0.
\end{eqnarray}
We note that $\pi (H^1_f(\Q, V))$ is the maximal divisible subgroup of $ H^1_f(\Q, A)$, and hence $\Sha^{\BK}(\Q, A)$ is always finite, in contrast to  a well-known conjecture for classical Tate-Shafarevich groups for elliptic curves. Since $\pi (H^1_f(\Q, V))$ is divisible, we have a surjection $$H^1_f(\Q, A)[p] \rightarrow \Sha^{\BK}(\Q, A)[p]$$ restricting the surjection in (\ref{seltosha}). Thus, we also have a surjection 
\begin{eqnarray}\label{nume}
H^1_f(\Q, M) \rightarrow \Sha^{\BK}(\Q, A)[p],
\end{eqnarray}
 since we have the surjection $i : H^1_f(\Q, M) \rightarrow H^1_f(\Q, A)[p]$ .

\subsection{The main result and a sketch of the proof}
Now we can state our main result in a precise form. 
\begin{thm}[Main result]\label{main}
We assume the following conditions hold. 
\begin{itemize}
\item[{\rm (A)}] $p$ does not divide the level $N$ of $f$.
\item[{\rm (B)}] If $f$ is supersingular at $p$, then $k\leqslant p+1$.\\
 If $f$ is ordinary at $p$, then $p-1\nmid k-1$ and neither of the conditions in Proposition~{\rm \ref{st3}} occur.
\item[{\rm (C)}] The image of $\overline{\rho}^0_{f} : G_{\Q}\rightarrow \mathrm{GL}_2(\F_p)$ contains $\mathrm{SL}_2 (\F_p)$.
\item[{\rm (D)}] For every prime number $\ell \mid N$, the Tamagawa factor $c(\Q_{\ell}, A_{f, D})$ of $A_{f, D}$ is trivial.
\end{itemize}
Then there exists a $\Gal(K_{f, D}/\Q)$-equivariant surjective homomorphism from $\cl(K_{f, D}) \otimes \F_p$ to $M_{f, D}$ if the inequality $\mathrm{dim}_{\F_p}( H_f^1(\Q, M_{f, D})) \geqslant 2$ holds.
\end{thm}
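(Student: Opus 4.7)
The plan is to rewrite the desired surjection as a Galois-cohomological statement and then produce the required morphism from a Bloch--Kato Selmer class. By class field theory, a $\Gal(K_{f, D}/\Q)$-equivariant homomorphism $\cl(K_{f, D}) \otimes \F_p \to M_{f, D}$ is the same as a $\Gal(K_{f, D}/\Q)$-equivariant homomorphism $\varphi \colon G_{K_{f, D}} \to M_{f, D}$ that is unramified at every finite place of $K_{f, D}$. Condition (C) implies that $M_{f, D}$ is irreducible as a $\Gal(K_{f, D}/\Q)$-module, so the image of any non-zero such $\varphi$, being a subrepresentation, is automatically all of $M_{f, D}$. It therefore suffices to produce one non-zero everywhere-unramified equivariant $\varphi$, which I will do by restricting a suitable class in $H^1_f(\Q, M_{f, D})$ from $G_\Q$ to $G_{K_{f, D}}$.

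Step 1 (Section 3): Apply the inflation--restriction sequence for $G_{K_{f, D}} \subseteq G_\Q$ to obtain
\[
0 \to H^1\bigl(\Gal(K_{f, D}/\Q), M_{f, D}\bigr) \to H^1(\Q, M_{f, D}) \xrightarrow{\mathrm{res}} \Hom_{\Gal(K_{f, D}/\Q)}(G_{K_{f, D}}, M_{f, D}).
\]
Using condition (C), bound the inflation kernel by a standard group-cohomology computation for $\mathrm{SL}_2(\F_p)$ acting on the twisted standard representation; it has dimension at most one. Combined with the hypothesis $\dim_{\F_p} H^1_f(\Q, M_{f, D}) \geqslant 2$, this furnishes a Selmer class whose restriction $\varphi$ to $G_{K_{f, D}}$ is non-zero, hence surjective by irreducibility. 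The same step should also verify that any Selmer class is unramified above every $\ell \nmid Np$: the local condition at such $\ell$ is literally $H^1_{\mathrm{ur}}(\Q_\ell, M_{f, D})$, and $K_{f, D}/\Q$ is unramified at $\ell$ by condition (A) and the good-reduction hypothesis.

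Step 2 (Section 4): At a prime $\ell \mid N$ the local Selmer condition is obtained as the image of $H^1_f(\Q_\ell, V_{f, D})$ under reduction mod $p$, and is not a priori unramified. The obstruction to unramifiedness is measured by the Tamagawa factor $c(\Q_\ell, A_{f, D})$, defined via the cokernel of $H^1_f(\Q_\ell, V_{f, D}) \to H^0(I_\ell, A_{f, D})$. Condition (D) makes this obstruction vanish, and an analysis of inertia-invariants shows that the restriction of any Selmer class to the decomposition group at $\ell$ is in fact unramified on $G_{K_{f, D}}$.

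Step 3 (Section 5): The prime $\ell = p$ is the main obstacle, because the crystalline condition defining $H^1_f(\Q_p, V_{f, D})$ is genuinely ramified on $G_{\Q_p}$ and becomes unramified only after restriction to decomposition groups inside $G_{K_{f, D}}$. In the supersingular case, hypothesis $k \leqslant p + 1$ from (B) places us in the Fontaine--Laffaille range, where crystalline mod $p$ classes can be described concretely via Fontaine--Laffaille modules and shown to die on inertia of $K_{f, D}$. In the ordinary case, use the ordinary filtration on $M_{f, D}$; the condition $p - 1 \nmid k - 1$ together with the avoidance of the exceptional configurations of Proposition~\ref{st3} ensures that both the sub-line and the quotient-line of the filtration contribute trivially after restriction to $G_{K_{f, D}}$. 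Combining the three steps, $\varphi$ is a non-zero $\Gal(K_{f, D}/\Q)$-equivariant homomorphism $G_{K_{f, D}} \to M_{f, D}$ that is unramified everywhere; by class field theory it factors through $\cl(K_{f, D}) \otimes \F_p$ and, by irreducibility, surjects onto $M_{f, D}$, which is the required conclusion.
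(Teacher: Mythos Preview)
Your overall framework is right, but you have misplaced where the hypothesis $\dim_{\F_p} H^1_f(\Q, M_{f,D}) \geq 2$ does its work, and this produces a genuine gap at the prime $p$.

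The problem is your Step~3. You assert that any Bloch--Kato Selmer class at $p$, once restricted to inertia groups of $K_{f,D}$ above $p$, becomes trivial (``die on inertia of $K_{f,D}$''). This is false in general and is not what assumptions (A) and (B) buy you. Under (B) the local condition $H^1_f(\Q_p, M_{f,D})$ has $\F_p$-dimension exactly~$1$, coming from the tangent-space term $\dim_{\Q_p}\mathbf{D}_{\mathrm{dR}}(V_{f,D})/\mathbf{D}^+_{\mathrm{dR}}(V_{f,D})=1$; this one-dimensional piece is genuinely ramified, and passing to the finite extension $K_{f,D,\mathfrak{p}}/\Q_p$ does not kill it. A crystalline cocycle restricted to the open finite-index subgroup $I_\mathfrak{p}\subset I_p$ is a homomorphism into $M_{f,D}$ with no reason to vanish; neither Fontaine--Laffaille theory in the supersingular range nor the ordinary filtration supplies such a vanishing. (Already for an ordinary elliptic curve the Kummer image of a local point coming from the formal group gives a ramified class that stays ramified over $\Q_p(E[p])$.)

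What the paper actually proves in Step~3 is a dimension bound: the composite $\Res^{\ur}_p\colon H^1_f(\Q, M_{f,D})\to H^1(\Q_p^{\ur}, M_{f,D})$ has image of $\F_p$-dimension at most~$1$, obtained by bounding $\dim_{\F_p} H^1_f(\Q_p, M_{f,D})$ via (\ref{hodge}), Lemma~\ref{tang}, and the vanishing of $A^{G_{\Q_p}}\otimes\F_p$ and $V^{G_{\Q_p}}$ ensured by~(B). Hence if $\dim_{\F_p} H^1_f(\Q, M_{f,D})\geq 2$ there is a Selmer class that is \emph{already unramified at $p$ over $\Q$}; together with Step~2 it lies in $H^1_{\ur}(\Q, M_{f,D})$. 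This is where the ``$\geq 2$'' is spent --- at the prime $p$, not at the inflation--restriction step. Correspondingly, the paper's Step~1 proves that $H^1(\Gal(K_{f,D}/\Q), M_{f,D})=0$ outright (via the normal subgroup $\{\pm I\}\subset \mathrm{SL}_2(\F_p)$ for $p\geq 5$, and a direct check for $p=3$), not merely that it has dimension $\leq 1$: full injectivity of $\Res_{K_{f,D}/\Q}$ is required so that the single globally unramified class found above survives restriction to $K_{f,D}$. Your weaker ``at most one'' bound in Step~1 would not suffice once the $2$-dimensional hypothesis has already been consumed at~$p$.
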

\begin{rem}
In \cite{PS}, Prasad and Shekhar only dealt with the case where $f$ is a rational cusp form of weight 2, and hence corresponds to an elliptic curve over $\Q$ as we mentioned in Section 1. Our main result implies their result at least partially because the assumptions in Theorem \ref{main} are a bit stronger than those in their result. 
\end{rem}
Here, we describe a sketch of the proof of Theorem \ref{main}. We mainly follow the strategy used in \cite{D} in which we gave another generalization of the result of Prasad and Shekhar \cite{PS}.

\vspace{2mm}
\textbf{Step 1}: \textit{We show that a restriction map 
\[
\Res_{K_{f, D}/\Q}: H^1(\Q, M_{f, D}) \rightarrow H^1(K_{f, D}, M_{f, D})^{\Gal(K_{f, D}/\Q)}
\]
is injective under the assumption $(C)$ in Theorem {\rm \ref{main}}.}

\vspace{2mm}
\noindent Assuming the claim in Step 1, we see that 
the above restriction map induces an injection between the unramified cohomology groups 
\[
H_{\ur}^1(\Q, M_{f, D}) \hookrightarrow H_{\ur}^1(K_{f, D}, M_{f, D})^{\Gal(K_{f, D}/\Q)}.
\]
 Due to class field theory, we have $$\Gal(K_{f, D}^{\ur}/K_{f, D})\cong \cl(K_{f, D}),$$ which implies that
\[
H_{\ur}^1(K_{f, D}, M_{f, D})^{\Gal(K_{f, D}/\Q)}=\Hom_{\Gal(K_{f, D}/\Q)}(\cl(K_{f, D}) \otimes \F_p, M_{f, D}).
\]
 Here, the right-hand side is the group of $\Gal(K_{f, D}/\Q)$-equivariant homomorphisms from $\cl(K_{f, D}) \otimes \F_p$ to $M_{f, D}$. Every non-zero homomorphism in it is surjective since the condition $(C)$ implies that $M_{f, D}$ is irreducible as a $\Gal(K_{f, D}/\Q)$-module. From this observation and the above injection between the unramified cohomology groups, it suffices to show $H_{\ur}^1(\Q, M_{f, D})\neq 0$ if $\mathrm{dim}_{\F_p}( H_f^1(\Q, M_{f, D})) \geqslant 2$.

\vspace{2mm}
\textbf{Step 2}: \textit{Under the assumption $(D)$ in Theorem {\rm \ref{main}}, we show that the image of $H_f^1(\Q, M_{f, D})$ in $H^1(\Q_{\ell}^{\ur}, M_{f, D})$ is zero for any $\ell\neq p$. In other words, we show that elements in $H_f^1(\Q, M_{f, D})$ are unramified outside $p$.}

\vspace{2mm}
\noindent Assuming the claim in Step 2, for a restriction map 
\[
\Res^{\ur}_p: H_f^1(\Q, M_{f, D}) \rightarrow H^1(\Q^{\ur}_p, M_{f, D}),
\]
we have $\Ker(\Res^{\ur}_p) \subset H_{\ur}^1(\Q, M_{f, D})$. Thus, it suffices to show that $\Ker(\Res^{\ur}_p) \neq 0$.

\vspace{2mm}
\textbf{Step 3}: \textit{
We show that $\mathrm{dim}_{\F_p} (\mathrm{Im}(\Res^{\ur}_p)) \leqslant 1$ under the assumptions {\rm (A)} and 
{\rm (B)} in Theorem {\rm \ref{main}}.}
\vspace{2mm}

\noindent The above inequality implies $\Ker(\Res^{\ur}_p) \neq 0$ if $\mathrm{dim}_{\F_p}(H^1_f(\Q, M_{f, D}))\geqslant 2$. This completes the proof of Theorem \ref{main}.

\begin{rem}\label{symst}
As we noted in Remark \ref{sym}, we can prove a generalization of Theorem \ref{main} to higher symmetric powers combining the procedure which we explain above and which we used in \cite{D}. More precisely, for $i$ with $0 \leqslant i \leqslant p-2$, we assume the following.
\begin{itemize}
\item[(A${}^{\prime}$)] $p$ does not divide the level $N$ of $f$.
\item[(B${}^\prime$)] If $f$ is supersingular at $p$, then $k\leqslant p+1$.\\
 If $f$ is ordinary at $p$, then $H^0(\Q_p, \mathrm{Sym}^i A_{f, D})\otimes \F_p=0$.
\item[(C${}^{\prime}$)] The representation $\overline{\rho}^0_{f} : G_{\Q}\rightarrow \mathrm{GL}_2(\F_p)$ is surjective.
\item[(D${}^{\prime}$)] For every prime number $\ell \mid N$, the Tamagawa factor $c(\Q_{\ell}, \mathrm{Sym}^i A_{f, D})$ of $\mathrm{Sym}^i A_{f, D}$ is trivial.
\end{itemize}
Then we can show that there exists a $\Gal(K_{f, D}/\Q)$-equivariant surjection $$\cl(K_{f, D}) \otimes \F_p \twoheadrightarrow\mathrm{Sym}^i M_{f, D}$$ if the inequality $\mathrm{dim}_{\F_p}( H_f^1(\Q, \mathrm{Sym}^i M_{f, D})) \geqslant i+1$ holds. When $i=1$ and $k>2$, the above condition (B${}^\prime$) is equivalent to (B) in 
Theorem~\ref{main}. 
\end{rem}

\section{Injectivity of the restriction map $\Res_{K_{f, D}/\Q}$}
In this section, we prove the claim of Step 1. In the following, we omit the suffixes of our $V_{f, D}, T_{f, D}, A_{f, D}, M_{f, D}$, $K_{f, D}$ as $V, T, A, M, K$ if no confusion occurs.

\begin{prop}\label{res}
Suppose that $\mathrm{Im}(\bar{\rho}^0_f)$ contains $\mathrm{SL}_2(\F_p)$ {\rm (}the assumption {\rm (C)} in 
Theorem~{\rm \ref{main})}. Then $\Res_{K/\Q}: H^1(\Q, M) \rightarrow H^1(K, M)^{\Gal(K/\Q)}$
is injective. 
\end{prop}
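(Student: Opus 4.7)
The plan is to apply the inflation-restriction sequence to the normal subgroup $G_K \triangleleft G_\Q$ with quotient $G := \Gal(K/\Q) \cong \mathrm{Im}(\bar{\rho}_{f, D})$. Since $K$ is the fixed field of $\Ker(\bar{\rho}_{f, D})$, the group $G_K$ acts trivially on $M$, so $M^{G_K} = M$ and the exact sequence
\begin{equation*}
0 \to H^1(G, M) \to H^1(\Q, M) \xrightarrow{\Res_{K/\Q}} H^1(K, M)^G
\end{equation*}
identifies $\Ker(\Res_{K/\Q})$ with $H^1(G, M)$. The task therefore reduces to showing $H^1(G, M) = 0$.

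The key intermediate step is to show that $G$ contains $\mathrm{SL}_2(\F_p)$, acting on $M \cong \F_p^{\oplus 2}$ by the standard representation. Writing $\psi := \omega_{\cyc}^{1 - k/2}\chi_D : G_\Q \to \F_p^{\times}$, one has $\bar{\rho}_{f, D} = \bar{\rho}^0_f \otimes \psi$, so on the open subgroup $H_0 := \Ker(\psi)$ the two representations coincide. The subgroup $\Gamma_0 := \bar{\rho}^0_f(H_0) = \bar{\rho}_{f, D}(H_0)$ is normal in $\Gamma := \mathrm{Im}(\bar{\rho}^0_f)$ with $[\Gamma : \Gamma_0]$ dividing $p - 1$, so $\Gamma_0 \cap \mathrm{SL}_2(\F_p)$ is a normal subgroup of $\mathrm{SL}_2(\F_p)$ of index at most $p - 1$. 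Inspecting the short list of normal subgroups of $\mathrm{SL}_2(\F_p)$ for $p$ odd---namely $\{I\}$, $\{\pm I\}$, $\mathrm{SL}_2(\F_p)$ for $p \geq 5$, together with the additional $Q_8 \triangleleft \mathrm{SL}_2(\F_3)$ of index $3$ when $p = 3$---one sees that every proper normal subgroup has index strictly greater than $p - 1$, forcing $\Gamma_0 \supset \mathrm{SL}_2(\F_p)$ and hence $G \supset \mathrm{SL}_2(\F_p)$.

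Since $\mathrm{SL}_2(\F_p) = G \cap \Ker(\det)$ is normal in $G$, inflation-restriction again gives
\begin{equation*}
0 \to H^1(G/\mathrm{SL}_2(\F_p), M^{\mathrm{SL}_2(\F_p)}) \to H^1(G, M) \to H^1(\mathrm{SL}_2(\F_p), M),
\end{equation*}
and $M^{\mathrm{SL}_2(\F_p)} = 0$ because the standard representation has no invariants. It therefore remains to verify $H^1(\mathrm{SL}_2(\F_p), \F_p^{\oplus 2}) = 0$, which is a classical Sylow-$p$ computation: restriction embeds $H^1(\mathrm{SL}_2(\F_p), \F_p^{\oplus 2})$ into $H^1(P, \F_p^{\oplus 2})^{N(P)/P}$ for $P \cong \Z/p\Z$ generated by an upper unitriangular matrix, and this invariant subspace is zero because $N(P)/P \cong \F_p^{\times}$, represented by diagonal matrices $\mathrm{diag}(a, a^{-1})$, acts on the one-dimensional $H^1(P, \F_p^{\oplus 2})$ by $a \mapsto a^{-1}$, which has no nonzero fixed vector for $p$ odd.

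The most delicate step is the middle one: passing from $\mathrm{Im}(\bar{\rho}^0_f) \supset \mathrm{SL}_2(\F_p)$ to $\mathrm{Im}(\bar{\rho}_{f, D}) \supset \mathrm{SL}_2(\F_p)$ across the twist by $\psi$ relies on the normal-subgroup classification of $\mathrm{SL}_2(\F_p)$, and deserves particular care at $p = 3$ where the quaternionic normal subgroup $Q_8$ appears.
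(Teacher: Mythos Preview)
Your argument is correct, with one small slip: the torus element $\mathrm{diag}(a,a^{-1})$ acts on the one-dimensional space $H^1(P,\F_p^{\oplus 2})$ by $a\mapsto a^{-3}$, not $a\mapsto a^{-1}$ (the conjugation $t^{-1}ut=u^{a^{-2}}$ contributes a factor $a^{-2}$, and the action of $t$ on the quotient $M/\F_p e_1$ contributes another $a^{-1}$). This does not affect the conclusion, since $a\mapsto a^{-3}$ is still a nontrivial character of $\F_p^{\times}$ for every odd prime $p$.

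Your route differs from the paper's in two places. First, to pass from $\mathrm{Im}(\bar\rho^0_f)\supset\mathrm{SL}_2(\F_p)$ to $G=\mathrm{Im}(\bar\rho_{f,D})\supset\mathrm{SL}_2(\F_p)$, the paper uses that $\mathrm{SL}_2(\F_p)$ is perfect for $p\geq 5$ (so it survives passage to any abelian quotient) and then gives a separate ad hoc argument for $p=3$ with explicit matrices; your index bound via the list of normal subgroups of $\mathrm{SL}_2(\F_p)$ handles all odd $p$ uniformly, which is cleaner. Second, once $G\supset\mathrm{SL}_2(\F_p)$ is known, the paper simply observes that the central subgroup $\{\pm I\}\subset G$ is normal of order prime to $p$ with $M^{\{\pm I\}}=0$, and invokes a general vanishing lemma (if a normal subgroup $H$ of order prime to $p$ satisfies $N^H=0$ then $H^i(G,N)=0$ for all $i$); this bypasses the Sylow computation of $H^1(\mathrm{SL}_2(\F_p),\F_p^{\oplus 2})$ entirely. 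Your approach is more hands-on but equally valid; the paper's $-I$ trick is shorter once one sees it.
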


\begin{proof}[Proof of Proposition {\rm \ref{res}}]
Due to the inflation-restriction exact sequence, the kernel of the restriction map $\Res_{K/\Q}$ is $H^1(K/\Q, M)$. Thus, it suffices to show that $H^1(K/\Q, M)=0$. We use the following lemma.
\begin{lem}\label{vanish}
Let $G$ be a finite group and $N$ a finite dimensional representation of $G$ over $\F_p$. If there exists a normal subgroup $H$ of $G$ such that $(1)$ $\#H$ is prime to  $p$ and $(2)$ $N^{H}=0$, then  $H^i(G, N)=0$ for all $i\geqslant 0$.
\end{lem}

For a proof of this lemma, see \cite[Lemma 3.2]{D}. \\

We divide the proof of Proposition~\ref{res} into two cases.

(Case 1 : $p\geqslant 5$)\ \ Let $L:=\Q(\zeta_p, \sqrt{D})$ where $\zeta_p$ is a primitive $p$-th root of unity. First, we show that the image of $\bar{\rho}^0_f: G_{\Q} \rightarrow \mathrm{GL}_2(\mathbb{F}_p)$ still contains $\mathrm{SL}_2(\F_p)$ when restricted to $G_L$. Let $F$ and $F_L$ be the fields corresponding to the kernel of $\bar{\rho}^0_f$ and $\bar{\rho}^0_f|_{G_L}$ in Galois theory respectively. We have $\Gal(F/\Q) \cong \mathrm{Im}(\bar{\rho}^0_f) \supset \mathrm{SL}_2(\F_p)$ and $\Gal(F/F\cap L) \cong \Gal(F_L/L)\cong\mathrm{Im}(\bar{\rho}^0_f|_{G_L})$. Let $F^{\prime}$ be the intermediate field of the extension $F/\mathbb{Q}$ corresponding to $\mathrm{SL}_2(\F_p)$. Then $F^{\prime}\cdot (F\cap L)/F^{\prime}$ is an abelian extension as the extension $F\cap L/\Q$ is. On the other hand, $\mathrm{SL}_2(\F_p)$ is a perfect group since we assume $p\geqslant 5$. In other words, $\mathrm{SL}_2(\F_p)$ has no non-trivial abelian quotients. Hence, we have $F^{\prime}\cdot (F\cap L)=F^{\prime}$ and $\mathrm{SL}_2(\F_p)\cap \Gal(F/F\cap L)=\mathrm{SL}_2(\F_p)$. Thus, $\mathrm{Im}(\bar{\rho}^0_{f}|_{G_L}) \cong \Gal(F/F\cap L) \supset \mathrm{SL}_2(\F_p)$. Since $\bar{\rho}_{f, D}|_{G_L}=\overline{\rho}^0_f|_{G_L}$, $\mathrm{Im}(\bar{\rho}_{f, D})\cong\Gal(K/\Q)$ also contains $\mathrm{SL}_2(\F_p)$ and especially  $-I$, where $I$ denotes the unit matrix in $\mathrm{SL}_2(\F_p)$. Then the subgroup $H$ of $\Gal(K/\Q)$ generated by $-I$ satisfies the conditions (1) and (2) in Lemma \ref{vanish} for $N=M$ and we have $H^i(K/\Q, M)=0$ for all $i\geqslant 0$.

(Case 2 : $p=3$)\ \ Since we assume that $\mathrm{Im}(\bar{\rho}^0_f) \supset \mathrm{SL}_2(\F_{3})$, $\mathrm{Im}(\bar{\rho}^0_f)$ contains the matrices
\[
A_1=\left(
 \begin{array}{cc}
 1&1\\
 1&-1
 \end{array}
 \right) , 
 A_2=\left(
 \begin{array}{cc}
 -1&-1\\
 -1&1
 \end{array}
 \right)
\] 
for a suitable basis of $M$ over $\F_3$. We take $\sigma \in G_{\Q}$ such that $\bar{\rho}^0_f(\sigma)=A_1$. Since both $\omega_{\cyc}$ and $\chi_{D}$ have order 2 in this case, $\bar{\rho}_{f, D}(\sigma)=A_1$ or $A_2$. We can show that there are no non-trivial elements in $M$ which are fixed by $A_1$ or $A_2$ by direct computation. Hence, if $\bar{\rho}_{f, D}(\sigma)=A_1 \in \Gal(K/\Q)$ (resp. $A_2$), then the subgroup of $\Gal(K/\Q)$ generated by $A_1$ (resp. $A_2$) satisfies the conditions $(1), (2)$ in Lemma \ref{vanish} for $N=M$ since every subgroup of ${\rm{SL}}_2(\F_3)$ is normal and $2$-group. Thus, we have $H^i(K/\Q, M)=0$ for all $i\geqslant 0$ from Lemma \ref{vanish}. 
\end{proof}

\section{Localization of the Selmer group $H^1_f(\mathbb{Q}, M)$ outside $p$}
\subsection{Tamagawa factor}
Here, we introduce some basic notions on the Tamagawa factor. In this subsection, $V$, $A$ and $M$ denotes a general $p$-adic Galois representations as in Subsection 2.2.

Fixing a prime number $\ell \neq p$, we write $I_{\ell}$ as the inertia subgroup of $G_{\Q_{\ell}}$. We define a finite group $\mathcal{A}:=A^{I_{\ell}}/ (A^{I_{\ell}})_{\mathrm{div}}$, where $(A^{I_{\ell}})_{\mathrm{div}}$ denotes the maximal divisible subgroup of $A^{I_{\ell}}$. 
\begin{defn}
We define the $p$-part of the Tamagawa factor of $A$ at $\ell$ as 
\[
c(\Q_{\ell}, A) := \#\mathcal{A} /(\Frob_{\ell}-1)\mathcal{A}=\#\mathcal{A}^{\Frob_{\ell}=1}.
\]
Here $\Frob_{\ell}$ denotes the Frobenius element in $\Gal(\Q^{\ur}_{\ell}/\Q_{\ell})$. 
\end{defn}

Note that for an endomorphism $g$ of $\mathcal{A}$, the orders of $\Ker(g)$ and $\mathrm{Coker}(g)$ are the same since $\mathcal{A}$ is finite. Hence, the second equality in the definition holds. Roughly speaking, the Tamagawa factor $c(\Q_{\ell}, A)$ can be used to measure the difference between $H^1_{\ur} (\Q_{\ell}, M)$ and the local condition $H^1_f(\Q_{\ell}, M)$.
\begin{prop}\label{urfinite}
If $c(\Q_{\ell}, A)=1$, then $H^1_f(\Q_{\ell}, M)=H^1_{\ur} (\Q_{\ell}, M)$.
\end{prop}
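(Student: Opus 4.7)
My plan is to establish the proposition by first proving the analogous statement for $A$, where the Tamagawa factor has a direct cohomological meaning, and then transferring the result to $M$ via the Kummer-type sequence $0 \to M \to A \xrightarrow{\times p} A \to 0$.

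First, I would use the standard description $H^1_{\ur}(\Q_{\ell}, X) \cong X^{I_{\ell}}/(\Frob_{\ell} - 1)X^{I_{\ell}}$ for any continuous $G_{\Q_{\ell}}$-module $X$. Since $H^1_f(\Q_{\ell}, V) = H^1_{\ur}(\Q_{\ell}, V)$ for $\ell \neq p$ and since $\pi : V \to A$ sends $V^{I_{\ell}}$ onto the maximal divisible subgroup $(A^{I_{\ell}})_{\mathrm{div}}$ of $A^{I_{\ell}}$ (this follows from (\ref{TVA}) together with the fact that $H^1(I_{\ell}, T)$ is a finitely generated $\Z_p$-module for $\ell \neq p$, hence has no nontrivial divisible subgroups), this identifies
\[
H^1_f(\Q_{\ell}, A) = (A^{I_{\ell}})_{\mathrm{div}}/(\Frob_{\ell} - 1)(A^{I_{\ell}})_{\mathrm{div}}
\]
as a subgroup of $H^1_{\ur}(\Q_{\ell}, A) = A^{I_{\ell}}/(\Frob_{\ell} - 1)A^{I_{\ell}}$.

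Next, applying the snake lemma to $0 \to (A^{I_{\ell}})_{\mathrm{div}} \to A^{I_{\ell}} \to \mathcal{A} \to 0$ viewed as a sequence of $\langle \Frob_{\ell} \rangle$-modules identifies the quotient $H^1_{\ur}(\Q_{\ell}, A)/H^1_f(\Q_{\ell}, A)$ with $\mathcal{A}/(\Frob_{\ell} - 1)\mathcal{A}$, whose order equals $c(\Q_{\ell}, A)$ by the finiteness of $\mathcal{A}$. Under the hypothesis $c(\Q_{\ell}, A) = 1$, this forces $H^1_f(\Q_{\ell}, A) = H^1_{\ur}(\Q_{\ell}, A)$.

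Finally, I would transfer this equality from $A$ to $M$ via the commutative diagram obtained by applying $G_{\Q_{\ell}}$- and $I_{\ell}$-cohomology to (\ref{pAA}): two horizontal short exact sequences of the form $0 \to A^{?}/pA^{?} \to H^1(?, M) \xrightarrow{i} H^1(?, A)[p] \to 0$ joined by restriction. Since $H^1_f(\Q_{\ell}, M) = i^{-1}(H^1_f(\Q_{\ell}, A)) = i^{-1}(H^1_{\ur}(\Q_{\ell}, A))$ by the previous step, a snake chase identifies this preimage with $H^1_{\ur}(\Q_{\ell}, M) = \Ker(H^1(\Q_{\ell}, M) \to H^1(I_{\ell}, M))$.

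The main obstacle I expect is this last diagram chase: one must check that the kernel $A^{G_{\Q_{\ell}}}/p A^{G_{\Q_{\ell}}}$ of the upper $i$ is already contained in $H^1_{\ur}(\Q_{\ell}, M)$, i.e.\ that every $G_{\Q_{\ell}}$-fixed element of $A$ is a $p$-th multiple inside $A^{I_{\ell}}$ modulo $M$. This lifting property is controlled precisely by $\mathcal{A}^{\Frob_{\ell} = 1}$, so the same Tamagawa hypothesis resolves it; the remainder of the chase is then routine.
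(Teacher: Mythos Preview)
Your approach is essentially the same as the paper's: first prove $H^1_f(\Q_{\ell},A)=H^1_{\ur}(\Q_{\ell},A)$ via the identification of the quotient with $\mathcal{A}/(\Frob_{\ell}-1)\mathcal{A}$, then pull back along the Kummer sequence~(\ref{pAA}). The paper carries out the second step by writing
\[
H^1_f(\Q_{\ell},M)=\ker\bigl(H^1(\Q_{\ell},M)\to H^1(\Q^{\ur}_{\ell},M)^{\Frob_{\ell}=1}\xrightarrow{g} H^1(\Q^{\ur}_{\ell},A)^{\Frob_{\ell}=1}\bigr)
\]
and showing $\Ker(g)=(A^{I_{\ell}}\otimes\F_p)^{\Frob_{\ell}=1}=(\mathcal{A}\otimes\F_p)^{\Frob_{\ell}=1}=0$.

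One point of your write-up deserves sharpening. You locate the ``main obstacle'' in showing that $\ker(i)=A^{G_{\Q_{\ell}}}/p$ lands in $H^1_{\ur}(\Q_{\ell},M)$, and call the rest routine. But that containment alone does not finish the chase: given $x$ with $i(x)\in H^1_{\ur}(\Q_{\ell},A)$, the restriction $\bar x\in H^1(I_{\ell},M)$ lies in $A^{I_{\ell}}/p$ \emph{and} in $H^1(I_{\ell},M)^{\Frob_{\ell}}$ (the image of restriction, by inflation--restriction), so the actual obstruction is
\[
(A^{I_{\ell}}/p)^{\Frob_{\ell}=1}=(\mathcal{A}/p)^{\Frob_{\ell}=1},
\]
which is strictly more than the image of $A^{G_{\Q_{\ell}}}/p$ in $A^{I_{\ell}}/p$. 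This is precisely the $\Ker(g)$ the paper computes. Of course it still vanishes under your hypothesis (since $\mathcal{A}^{\Frob_{\ell}=1}=0$ makes $\Frob_{\ell}-1$ an automorphism of the finite group $\mathcal{A}$, hence of $\mathcal{A}/p$), so your plan goes through; just be aware that this vanishing, not the inclusion $\ker(i)\subset H^1_{\ur}$, is the substantive step hidden in your ``routine'' tail.
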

\begin{proof}
We first consider the local condition with coefficients in $A$.
\begin{eqnarray*}
H^1_{\ur}(\Q_{\ell}, A)/H^1_f(\Q_{\ell}, A) & \overset{\sim}{\longrightarrow} & \mathrm{Coker}\left(H^1_f(\Q_{\ell}, V)=H^1_{\ur}(\Q_{\ell}, V) \overset{\pi}{\longrightarrow} H^1_{\ur}(\Q_{\ell}, A)\right)\\
&\overset{\sim}{\longrightarrow}& \mathrm{Coker}\left(V^{I_{\ell}}/(\Frob_{\ell}-1)(V^{I_{\ell}}) \overset{\pi}{\longrightarrow} A^{I_{\ell}}/(\Frob_{\ell}-1)(A^{I_{\ell}})\right)\\
&\overset{\sim}{\longrightarrow}& \mathcal{A}/(\Frob_{\ell}-1)\mathcal{A}.
\end{eqnarray*}
Here, we use the identification $$H^1_{\ur}(\Q_{\ell}, V) = H^1(\Q^{\ur}_{\ell}/\Q_{\ell}, V^{I_{\ell}})\simeq V^{I_{\ell}}/(\Frob_{\ell}-1)(V^{I_{\ell}})$$ in the second isomorphism above. Thus, if $c(\Q_{\ell}, A)=1$, we have $$H^1_{\ur}(\Q_{\ell}, A)=H^1_f(\Q_{\ell}, A).$$
On the other hand, since $H^1_f(\Q_{\ell}, M)$ is the inverse image of $H^1_f(\Q_{\ell}, A)=H^1_{\ur}(\Q_{\ell}, A)$ under $i: H^1_f(\Q_{\ell}, M) \rightarrow H^1_f(\Q_{\ell}, A)$, we have  
\begin{eqnarray*}
H^1_f(\Q_{\ell}, M)&=&\mathrm{ker}(H^1(\Q_{\ell}, M) \rightarrow H^1(\Q_{\ell}, A) \rightarrow H^1(\Q^{\ur}_{\ell}, A)^{\Frob_{\ell}=1})\\
&=& \ker(H^1(\Q_{\ell}, M) \rightarrow H^1(\Q^{\ur}_{\ell}, M)^{\Frob_{\ell}=1} \overset{g}{\rightarrow} H^1(\Q^{\ur}_{\ell}, A)^{\Frob_{\ell}=1}).
\end{eqnarray*}
We can see that the above homomorphism $g$ is injective. In fact, from the exact sequence (\ref{pAA}), we have an exact sequence
\[
0\rightarrow (A^{I_{\ell}} \otimes \F_p)^{\Frob_{\ell}=1} \longrightarrow H^1(\Q^{\ur}_{\ell}, M)^{\Frob_{\ell}=1}\overset{g}{\longrightarrow} H^1(\Q^{\ur}_{\ell}, A)^{\Frob_{\ell}=1}.
\]
Since $A^{I_{\ell}} \otimes \F_p = \mathcal{A}\otimes \F_p$, we obtain
\[
\Ker(g)=(A^{I_{\ell}} \otimes \F_p)^{\Frob_{\ell}=1}=(\mathcal{A} \otimes \F_p)^{\Frob_{\ell}=1}=\mathcal{A}^{\Frob_{\ell}=1} \otimes \F_p=0.
\]
In the third equality above, we use the assumption $c(\Q_{\ell}, A)= \#\mathcal{A}^{\Frob_{\ell}=1}=1$ to get $H^1(\Q^{\ur}_{\ell}/\Q_{\ell}, \mathcal {A})=0$. Thus the homomorphism $g$ is injective and
\[
H^1_f(\Q_{\ell}, M)= \Ker(H^1(\Q_{\ell}, M) \rightarrow H^1(\Q^{\ur}_{\ell}, M)^{\Frob_{\ell}=1} )=H^1_{\ur}(\Q_{\ell}, M). 
\] 
The proof of Proposition \ref{urfinite} is complete.
\end{proof}

We introduce a sufficient condition on $M=A[p]$ for $c(\mathbb{Q}_{\ell}, A)=1$.
\begin{prop}\label{suftam}
For a prime number $\ell \neq p$, if $M^{G_{\Q_{\ell}}}=0$, then $c(\Q_{\ell}, A)=1$.
\end{prop}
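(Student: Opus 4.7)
The plan is to prove $\mathcal{A}^{\Frob_\ell = 1} = 0$, from which $c(\Q_\ell, A) = \#\mathcal{A}^{\Frob_\ell = 1} = 1$ follows at once. First I would reduce to the $p$-torsion: for a finite abelian $p$-group $\mathcal{A}$ with an endomorphism, any nonzero fixed element $x$ admits an integer $n$ with $p^n x$ a nonzero $p$-torsion fixed element, so $\mathcal{A}^{\Frob_\ell = 1} = 0$ is equivalent to $\mathcal{A}[p]^{\Frob_\ell = 1} = 0$.

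To access $\mathcal{A}[p]$, I would apply the snake lemma for multiplication by $p$ to the defining short exact sequence
\[
0 \to (A^{I_\ell})_{\mathrm{div}} \to A^{I_\ell} \to \mathcal{A} \to 0.
\]
Since $(A^{I_\ell})_{\mathrm{div}}$ is $p$-divisible, its mod-$p$ cokernel vanishes, which yields a short exact sequence
\[
0 \to (A^{I_\ell})_{\mathrm{div}}[p] \to M^{I_\ell} \to \mathcal{A}[p] \to 0
\]
of $\F_p[\langle \Frob_\ell \rangle]$-modules, using the identification $A^{I_\ell}[p] = A[p]^{I_\ell} = M^{I_\ell}$.

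The final step is to take the long exact sequence of $\Frob_\ell$-invariants/coinvariants attached to this sequence and invoke the hypothesis $M^{G_{\Q_\ell}} = (M^{I_\ell})^{\Frob_\ell = 1} = 0$. The left-hand invariants $((A^{I_\ell})_{\mathrm{div}}[p])^{\Frob_\ell = 1}$ embed into $M^{G_{\Q_\ell}} = 0$, so $\Frob_\ell - 1$ is injective on $(A^{I_\ell})_{\mathrm{div}}[p]$; because this is a finite-dimensional $\F_p$-vector space, injectivity upgrades to bijectivity, killing the coinvariants $(A^{I_\ell})_{\mathrm{div}}[p]/(\Frob_\ell - 1)$ as well. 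The long exact sequence then sandwiches $\mathcal{A}[p]^{\Frob_\ell = 1}$ between two zeros, forcing $\mathcal{A}[p]^{\Frob_\ell = 1} = 0$, which is what we needed.

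I do not foresee any substantial obstacle. The only point requiring a little care is arranging the snake-lemma diagram correctly and noticing that injectivity and surjectivity coincide for an endomorphism of a finite $\F_p$-vector space; this elementary observation is what upgrades the hypothesis $M^{G_{\Q_\ell}} = 0$ into the simultaneous vanishing of the invariants and coinvariants of $\Frob_\ell - 1$ on the divisible part, and is really the one place where the hypothesis is used.
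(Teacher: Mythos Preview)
Your argument is correct. It differs from the paper's proof only in the order of operations: the paper applies the snake lemma for $\Frob_\ell - 1$ directly to the sequence $0 \to (A^{I_\ell})_{\mathrm{div}} \to A^{I_\ell} \to \mathcal{A} \to 0$, first deducing $A^{G_{\Q_\ell}} = 0$ from $M^{G_{\Q_\ell}} = 0$ and then using that an injective endomorphism of a cofinitely generated divisible $\Z_p$-module (a finite sum of copies of $\Q_p/\Z_p$) is automatically an isomorphism. You instead pass to $p$-torsion first via the snake lemma for multiplication by $p$, and then run the $\Frob_\ell$-cohomology long exact sequence on the resulting sequence of finite $\F_p$-vector spaces, invoking the same injective-implies-bijective principle in that setting. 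Both routes hinge on the identical elementary observation; yours has the mild advantage of staying entirely within finite $\F_p$-vector spaces and using the hypothesis $M^{G_{\Q_\ell}}=0$ verbatim, while the paper's is slightly shorter, needing only one diagram chase rather than two.
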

\begin{proof}
We have a commutative diagram of exact sequences
\[
  \xymatrix{
 0 \ar[r]^{} &  (A^{I_{\ell}})_{\mathrm{div}}\ar[r]\ar[d]^{\Frob_{\ell} -1} &   A^{I_{\ell}} \ar[r]^{}\ar[d]^{\Frob_{\ell} -1} & \mathcal{A}\ar[r]^{} \ar[d]^{\Frob_{\ell} -1}& 0\\
 0 \ar[r]^{} &  (A^{I_{\ell}})_{\mathrm{div}}\ar[r] &   A^{I_{\ell}} \ar[r]^{} & \mathcal{A}\ar[r]^{} & 0.
  }
\]
The assumption $M^{G_{\Q_{\ell}}}=0$ implies $A^{G_{\Q_{\ell}}}=0$. In fact, if we have a non-trivial element $x \in A^{G_{\Q_{\ell}}}$, then the element $p^mx$ is a non-trivial element in $M^{G_{\Q_{\ell}}}=A[p]^{G_{\Q_{\ell}}}$ for sufficiently large $m \in \Z$. Hence, the middle vertical arrow is injective and so is the left vertical arrow. While $(A^{I_{\ell}})_{\mathrm{div}}$ is the direct sum of finite numbers of $\Q_p/\Z_p$, the injective left vertical arrow is in fact an isomorphism. Therefore, the right vertical arrow is injective by Snake lemma which implies $c(\Q_{\ell}, A)=\#\mathcal{A}^{\Frob_{\ell} =1}=1$.
\end{proof}

\subsection{Unramifiedness of $H^1_f(\Q, M_{f, D})$ outside $p$}
In this subsection, we reset $A=A_{f, D}$ and $M=M_{f, D}$. Now we prove the following proposition which is the claim in Step 2. 
\begin{prop}\label{st2}
If $c(\Q_{\ell}, A)=1$ for every prime $\ell$ with $\ell\mid N$ {\rm (}the assumption {\rm (D)} in Theorem~{\rm \ref{main})}, then the elements in $H_f^1(\Q, M)$ are unramified outside $p$.
\end{prop}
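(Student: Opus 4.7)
The plan is to reduce the unramifiedness assertion at each prime $\ell \neq p$ to the local identity $H^1_f(\Q_\ell, M) = H^1_{\ur}(\Q_\ell, M)$. Indeed, by definition of the Bloch-Kato Selmer group, for any $c \in H^1_f(\Q, M)$ the restriction $\mathrm{Loc}_\ell(c)$ lands in $H^1_f(\Q_\ell, M)$; if this local condition coincides with $H^1_{\ur}(\Q_\ell, M)$ for every $\ell \neq p$, then the image of $c$ in $H^1(\Q_\ell^{\ur}, M)$ vanishes, which is exactly the statement that $c$ is unramified outside $p$. By Proposition \ref{urfinite}, this desired coincidence is implied by $c(\Q_\ell, A) = 1$, so the entire problem reduces to showing that the mod $p$ Tamagawa factor is trivial at every $\ell \neq p$.

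To do this I would partition the primes $\ell \neq p$ into three classes. First, for $\ell \mid N$, triviality of $c(\Q_\ell, A)$ is exactly the hypothesis (D) of Theorem~\ref{main}. Second, for $\ell \nmid Np$ with $\ell \nmid D$, the Deligne representation $V_f$ is unramified at $\ell$, as is $\chi_D$, so $V = V_{f, D}$ is unramified at $\ell$; hence $A^{I_\ell} = A$ is $p$-divisible, which forces $\mathcal{A} = 0$ and therefore $c(\Q_\ell, A) = 1$. Third, for $\ell \mid D$ with $\ell \nmid N$, the representation $V_f$ is unramified at $\ell$ but $\chi_D|_{I_\ell}$ is a nontrivial character of order $2$, so $I_\ell$ acts on $V = V_f \otimes \chi_D$ through the scalar $-1$; then $M^{I_\ell}$ is the kernel of $-2$ on $M$, which is zero since $p$ is odd. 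In particular $M^{G_{\Q_\ell}} = 0$, and Proposition~\ref{suftam} gives $c(\Q_\ell, A) = 1$.

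Combining the three cases, Proposition \ref{urfinite} gives $H^1_f(\Q_\ell, M) = H^1_{\ur}(\Q_\ell, M)$ at every $\ell \neq p$, whence the conclusion. I expect the only slightly delicate point to be case (iii): one needs to see that twisting by a ramified quadratic character truly kills the inertia invariants, and this rests squarely on the oddness of $p$ (so that $A[2] = 0$) together with the fact that $V_f$ itself is unramified at such $\ell$. Once this is handled, the rest of the argument is a routine bookkeeping reduction to Propositions \ref{urfinite} and \ref{suftam}.
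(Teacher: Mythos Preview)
Your proposal is correct and follows essentially the same route as the paper: reduce to showing $c(\Q_\ell,A)=1$ for all $\ell\neq p$, handle $\ell\mid N$ by hypothesis (D), and for $\ell\nmid N$ split according to whether $\chi_D$ is unramified at $\ell$ (so $A^{I_\ell}=A$ is divisible) or ramified (so inertia acts by $-1$ and the invariants vanish since $p$ is odd), then invoke Proposition~\ref{urfinite}. The only cosmetic differences are that the paper computes $A^{I_\ell}=0$ directly in the ramified case rather than passing through Proposition~\ref{suftam}, and it also remarks that the archimedean place is harmless because $p\geqslant 3$.
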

\begin{proof}
Since $p\geqslant 3$, there is nothing to prove for unramifiedness at the infinite place of $\Q$. If a prime number $\ell\ (\neq p)$ does not divide $N$, the inertia subgroup $I_{\ell}$ acts on $A^0_f$ trivially. So we have 
 \[
A_{f, D}^{I_{\ell}}=
\begin{cases}
A_{f, D} & (\sqrt{D} \in  \Q^{\ur}_{\ell}),\\
0 & (\mathrm{otherwise}),
\end{cases}
\] 
which implies $c(\Q_{\ell}, A_{f, D})=1$  in both cases. While for $\ell\mid N$, we assume that $c(\Q_{\ell}, A)=1$. Hence, we have $H^1_f(\Q_{\ell}, M)=H^1_{\mathrm{ur}}(\Q_{\ell}, M)$ for all $\ell \neq p$ from Proposition \ref{urfinite}. Thus, elements in the Selmer group $H^1_f(\Q, M)$ are unramified outside $p$.
\end{proof}

\section{Localization of the Selmer group $H^1_f(\Q, M_{f, D})$ at $p$ }
In this section, we consider the restriction map $\Res_p^{\ur}:H_f^1(\Q, M) \rightarrow H^1(\Q^{\ur}_p, M)$ at $p$. We show that its image has at most dimension 1 over $\F_p$ under the assumptions (A) and (B) in Theorem~\ref{main}. We decompose $\Res_p^{\ur}$ as
\[
\Res_p^{\ur}:H_f^1(\Q, M) \xrightarrow{\mathrm{Loc}_p} H^1(\Q_p,M) \xrightarrow{\Res_{\Q_p^{\ur}/\Q_p}}  H^1(\Q^{\ur}_p, M).
\]
Here, $\Res_{\Q_p^{\ur}/\Q_p}$ denotes the restriction of cohomology classes to the inertia subgroup at $p$. We first study the image of $H^1_f(\Q, M)$ under $\mathrm{Loc}_p$ which is a subspace of the local condition $H^1_f(\Q_p, M)$. By the definition of the local condition $H^1_f(\Q_p, M)$ and (\ref{pAA}), we have an exact sequence 
\begin{eqnarray*}\label{locexact}
0 \rightarrow  \displaystyle  A^{G_{\Q_p}}\otimes \F_p \rightarrow H_f^1(\Q_p, M)\xrightarrow{\iota}  H_f^1(\Q_p, A)[p] \rightarrow 0.
\end{eqnarray*}
 Therefore, we have an inequality
 \begin{eqnarray}\label{fundineq}
 \mathrm{dim}_{\F_p}(\mathrm{Im}(\mathrm{Loc}_p))&\leqslant& \mathrm{dim}_{\F_p}(H^1_f(\Q_p, M))\nonumber\\
  &\leqslant& \displaystyle \mathrm{dim}_{\F_p}\left(A^{G_{\Q_p}}\otimes \F_p\right)+\mathrm{dim}_{\F_p}\left( H_f^1(\Q_{p}, A)[p] \right).
 \end{eqnarray}
 Since $H_f^1(\Q_{p}, A)$ is the image of $H_f^1(\Q_{p}, V)$ and cofinitely generated as a $\Z_p$-module, we have 
 $$\mathrm{dim}_{\F_p}( H_f^1(\Q_p, A)[p] )=\mathrm{dim}_{\Q_p}(H_f^1(\Q_{p}, V)).$$
 The dimension of $H_f^1(\Q_{p}, V)$ can be computed by the following fact.
 \begin{prop}[\cite{BK}, Corollary 3.8.4]
 For a $p$-adic representation $V$ of $G_{\Q_p}$, we define 
 \begin{gather*}
 \mathbf{D}_{\mathrm{dR}}(V):=(V\otimes \mathbf{B}_{\mathrm{dR}})^{G_{\Q_p}}, \\
\mathbf{D}^{+}_{\mathrm{dR}}(V):=(V\otimes \mathbf{B}^{+}_{\mathrm{dR}})^{G_{\Q_p}}. 
\end{gather*}
Here, $\mathbf{B}_{\mathrm{dR}}$ is Fontaine's de Rham period ring and $\mathbf{B}^{+}_{\mathrm{dR}}$ is its valuation ring. If $V$ is a de Rham representation, then
 \begin{eqnarray}\label{hodge}
 \mathrm{dim}_{\Q_p}(H_f^1(\Q_p, V))=  \mathrm{dim}_{\Q_p}(\mathbf{D}_{\mathrm{dR}}(V)/ \mathbf{D}^{+}_{\mathrm{dR}}(V))+{\mathrm{dim}}_{\Q_p}H^0(\Q_p, V).
 \end{eqnarray}
 \end{prop}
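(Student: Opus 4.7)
The plan is to derive (\ref{hodge}) from Fontaine's fundamental exact sequence
\[
0 \longrightarrow \Q_p \longrightarrow \mathbf{B}_{\mathrm{crys}}^{\varphi=1} \longrightarrow \mathbf{B}_{\mathrm{dR}}/\mathbf{B}_{\mathrm{dR}}^{+} \longrightarrow 0,
\]
by tensoring with the de Rham representation $V$, taking $G_{\Q_p}$-cohomology, and reading off $\dim_{\Q_p}H^1_f(\Q_p,V)$ via the Bloch-Kato exponential.

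Concretely, tensoring the fundamental sequence by $V$ over $\Q_p$ preserves exactness (flatness of the period rings), and taking $G_{\Q_p}$-cohomology together with the standard identifications $H^0(\Q_p, V\otimes\mathbf{B}_{\mathrm{crys}}) = \mathbf{D}_{\mathrm{crys}}(V)$ and $H^0(\Q_p, V\otimes (\mathbf{B}_{\mathrm{dR}}/\mathbf{B}_{\mathrm{dR}}^{+})) = \mathbf{D}_{\mathrm{dR}}(V)/\mathbf{D}_{\mathrm{dR}}^{+}(V)$ (the second using that $V$ is de Rham) produces
\[
0 \to H^0(\Q_p, V) \to \mathbf{D}_{\mathrm{crys}}(V)^{\varphi=1} \to \mathbf{D}_{\mathrm{dR}}(V)/\mathbf{D}_{\mathrm{dR}}^{+}(V) \xrightarrow{\,\exp_{\BK}\,} H^1(\Q_p, V),
\]
where $\exp_{\BK}$ is the Bloch-Kato exponential. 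One verifies that $\mathrm{Im}(\exp_{\BK})$ equals the ``exponential'' subspace $H^1_e(\Q_p,V) := \ker(H^1(\Q_p, V) \to H^1(\Q_p, V\otimes\mathbf{B}_{\mathrm{crys}}^{\varphi=1}))$.

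Next I would relate $H^1_e$ to $H^1_f$ by applying $G_{\Q_p}$-cohomology to the exact sequence
\[
0 \to V\otimes \mathbf{B}_{\mathrm{crys}}^{\varphi=1} \to V\otimes \mathbf{B}_{\mathrm{crys}} \xrightarrow{1-\varphi} V\otimes \mathbf{B}_{\mathrm{crys}} \to 0,
\]
which produces a short exact sequence
\[
0 \to H^1_e(\Q_p, V) \to H^1_f(\Q_p, V) \to \mathbf{D}_{\mathrm{crys}}(V)/(1-\varphi)\mathbf{D}_{\mathrm{crys}}(V) \to 0.
\]
Combining the two exact sequences and summing $\Q_p$-dimensions alternately, the contributions $\dim\mathbf{D}_{\mathrm{crys}}(V)^{\varphi=1}$ and $\dim\mathbf{D}_{\mathrm{crys}}(V)/(1-\varphi)\mathbf{D}_{\mathrm{crys}}(V)$ cancel (they coincide by rank-nullity on the finite-dimensional $\mathbf{D}_{\mathrm{crys}}(V)$), and what remains is exactly (\ref{hodge}).

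The main technical obstacle is the pair of identifications $\mathrm{Im}(\exp_{\BK}) = H^1_e(\Q_p,V)$ and the surjectivity of $H^1_f(\Q_p,V) \to \mathbf{D}_{\mathrm{crys}}(V)/(1-\varphi)\mathbf{D}_{\mathrm{crys}}(V)$: both ultimately rest on the vanishing $H^1(\Q_p, V\otimes\mathbf{B}_{\mathrm{dR}}^{+}) = 0$, which is extracted from Tate's computation of $H^1_{\mathrm{cts}}(\Q_p, \C_p(j))$ filtered step-by-step up $\mathbf{B}_{\mathrm{dR}}^{+}$. Since this linearization-of-continuous-cohomology machinery is standard and not the focus of this paper, I would simply cite \cite[Corollary 3.8.4]{BK} rather than reproduce its proof in full.
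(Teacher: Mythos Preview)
Your sketch is correct and follows the standard Bloch--Kato argument. However, note that the paper does not prove this proposition at all: it is stated purely as a citation of \cite[Corollary 3.8.4]{BK}, with no accompanying proof. Your own conclusion---that one should simply cite the result rather than reproduce the period-ring machinery---is therefore exactly what the paper does.
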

 \begin{lem}\label{tang}
 \[
 \mathrm{dim}_{\Q_p}(\mathbf{D}_{\mathrm{dR}}(V_{f, D})/ \mathbf{D}^{+}_{\mathrm{dR}}(V_{f, D}))=1.
 \]
 \end{lem}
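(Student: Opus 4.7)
The plan is to reduce the statement to a Hodge--Tate weight count. Under the convention built into the definitions of $\mathbf{D}_{\mathrm{dR}}$ and $\mathbf{D}_{\mathrm{dR}}^+$, the quotient $\mathbf{D}_{\mathrm{dR}}(W)/\mathbf{D}_{\mathrm{dR}}^+(W)$ has $\Q_p$-dimension equal to the number of strictly positive Hodge--Tate weights of a de Rham representation $W$; it therefore suffices to check that $V_{f,D}$ is de Rham and to list its HT weights.

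The de Rham condition is straightforward. By assumption (A) the prime $p$ does not divide $N$, so $V_f$ is crystalline at $p$ by the construction of Eichler--Shimura--Deligne, while the finite-order character $\chi_D$ becomes trivial on $G_{\Q_p(\sqrt{D})}$, hence is potentially unramified and in particular de Rham. Therefore $V_{f,D}=V_f\otimes\chi_D$ is de Rham, and the Bloch--Kato formalism applies.

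For the HT weights, Eichler--Shimura--Deligne gives $V_f^0$ the HT weights $\{0,k-1\}$. The Tate twist $\chi_{\cyc}^{1-k/2}$ appearing in the definition of $V_f$ shifts the HT weights uniformly; the correct sign of the shift is pinned down by the self-duality isomorphism (\ref{selfdual}), which forces the HT weights of $V_f$ to sum to $1$ and hence to equal $\{1-k/2,\,k/2\}$. Since $\chi_D$ is of finite order, it is Hodge--Tate of weight $0$, so tensoring by it preserves HT weights; thus $V_{f,D}$ also has HT weights $\{1-k/2,\,k/2\}$. Since $k\geq 2$ is even, we have $k/2\geq 1>0\geq 1-k/2$, so exactly one HT weight of $V_{f,D}$ is strictly positive, giving $\dim_{\Q_p}(\mathbf{D}_{\mathrm{dR}}(V_{f,D})/\mathbf{D}_{\mathrm{dR}}^+(V_{f,D}))=1$ as claimed. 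The main obstacle is not conceptual but a matter of correctly fixing the sign in the HT-weight shift under the Tate twist; appealing to (\ref{selfdual}) resolves this unambiguously.
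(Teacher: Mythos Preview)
Your proof is correct and follows essentially the same approach as the paper: verify that $V_{f,D}$ is de Rham, compute its Hodge--Tate weights as $\{1-k/2,\,k/2\}$, and read off the single filtration jump. The only cosmetic difference is that the paper states directly that twisting by $\chi_{\cyc}$ shifts the Hodge--Tate weights by $+1$, whereas you recover this shift from the self-duality isomorphism~(\ref{selfdual}).
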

 \begin{proof}
 Since we assume $p\nmid N$, $V^0_f$ is a crystalline, especially a de Rham representation when restricted to $G_{\Q_p}$. We know that $\Q_p(\chi_{\cyc}^{1-k/2})$ and quadratic character $\chi_{D}$ are also de Rham as representations of $G_{\Q_p}$. Since tensor products of de Rham representations are also de Rham, $V_{f, D}=V_f^0 \otimes \Q_p(\chi_{\cyc}^{1-k/2})\otimes \chi_D$ is a de Rham representation, and therefore we obtain 
 $$\mathrm{dim}_{\Q_p}\mathbf{D}_{\mathrm{dR}}(V_{f, D})=\mathrm{dim}_{\Q_p}(V_{f, D})=2.$$

On the other hand, the Hodge-Tate weights of $V_f^0$ are \{0, $k-1$\} as we mentioned in Subsection \ref{notation}. Note that twisting a $p$-adic representation $\rho$ of $G_{\Q_p}$ by $\chi_D$ does not change its Hodge-Tate weights, while twisting $\rho$ by $\chi_{\cyc}$ shifts them by $+1$. Hence, the Hodge-Tate weights of $V_{f, D}$ are $\{1-k/2, k/2\}$, which implies that ${\mathrm{dim}}_{\Q_p}(\mathbf{D}^{+}_\mathrm{dR}(V))=1$. Thus, we obtain 
$${\mathrm{dim}}_{\Q_p}(\mathbf{D}_{\mathrm{dR}}(V)/ \mathbf{D}^{+}_{\rm{dR}}(V))=2-1=1.$$ 
This ends the proof of Lemma \ref{tang}.
\end{proof}

 Using (\ref{hodge}) and Lemma \ref{tang}, we see that
 \begin{eqnarray}\label{ineq2}
 \mathrm{dim}_{\F_p}( H_f^1(\Q_p, A)[p]) =\mathrm{dim}_{\Q_p}(H_f^1(\Q_{p}, V))=1 + \mathrm{dim}_{\Q_p}H^0(\Q_p, V).
 \end{eqnarray}
 By (\ref{fundineq}) and (\ref{ineq2}), we deduce that 
 \begin{eqnarray}\label{ineq}
  \mathrm{dim}_{\F_p}(\mathrm{Im}(\mathrm{Loc}_p)) \leqslant \displaystyle \mathrm{dim}_{\F_p}\left(A^{G_{\Q_p}}\otimes \F_p\right)+1+ \mathrm{dim}_{\Q_p}H^0(\Q_p, V).
 \end{eqnarray}
In the following, we compute the first and the third terms in the above inequality. 
\subsection{Supersingular case}
\begin{prop}\label{compss}
If $f$ is supersingular at $p$ and $k\leqslant p+1$, then we have 
\[
A^{G_{\Q_p}}\otimes \F_p=V^{G_{\Q_p}}=0.
\]
\end{prop}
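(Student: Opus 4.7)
The plan is to reduce both statements to irreducibility properties of the local representations $V|_{G_{\Q_p}}$ and $M|_{G_{\Q_p}}$.

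First I would prove that $V|_{G_{\Q_p}}$ is irreducible, from which $V^{G_{\Q_p}}=0$ is immediate since $V$ is two-dimensional. Under assumption (A), $V_f^0|_{G_{\Q_p}}$ is crystalline with Hodge--Tate weights $\{0,k-1\}$; in the supersingular case the two slopes of the crystalline Frobenius are both $(k-1)/2$, a half-integer since $k$ is even. If $V_f^0|_{G_{\Q_p}}$ admitted a one-dimensional sub-representation, then weak admissibility of the associated filtered $\varphi$-module would force the single Frobenius slope of that sub to coincide with one of its Hodge--Tate weights, but neither $0$ nor $k-1$ equals $(k-1)/2$. Hence $V_f^0|_{G_{\Q_p}}$ is irreducible, and so is its character twist $V=V_{f,D}$; in particular $V^{G_{\Q_p}}=0$.

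Next I would reduce $A^{G_{\Q_p}}\otimes \F_p=0$ to the vanishing of $M^{G_{\Q_p}}$. Using the just-proven $V^{G_{\Q_p}}=0$ and the long exact sequence associated to $0\to T\to V\to A\to 0$, the group $A^{G_{\Q_p}}$ embeds into $H^1(\Q_p, T)$. Since $A^{G_{\Q_p}}$ is $p$-primary torsion as a subgroup of $A\cong (\Q_p/\Z_p)^{\oplus 2}$ and sits inside a finitely generated $\Z_p$-module, it is finite; for a finite abelian $p$-group, vanishing after $\otimes\F_p$ is equivalent to vanishing outright, which is in turn equivalent to $M^{G_{\Q_p}}=A^{G_{\Q_p}}[p]=0$.

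For the final step I would invoke the classical description of the mod-$p$ local Galois representation attached to a supersingular modular form in low weight, due to Deligne, Fontaine and Edixhoven: under the hypotheses $p\nmid N$, supersingular at $p$, and $2\leqslant k\leqslant p+1$, the restriction $\bar\rho_f^0|_{I_p}$ is isomorphic to $\omega_2^{k-1}\oplus \omega_2^{p(k-1)}$, where $\omega_2$ denotes a fundamental character of level $2$. Since $1\leqslant k-1\leqslant p$, the two exponents are distinct modulo $p^2-1$ and are interchanged by the nontrivial element of $\Gal(\F_{p^2}/\F_p)$; consequently $\bar\rho_f^0|_{G_{\Q_p}}$ is induced from the unramified quadratic extension, and in particular is irreducible. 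Twisting by $\omega_{\cyc}^{1-k/2}$ and $\chi_D$ preserves irreducibility, so $\bar\rho_{f,D}|_{G_{\Q_p}}$ has no nonzero fixed vector. The main obstacle is locating and citing the correct version of this local structure theorem at the endpoint $k=p+1$, which sits at the boundary of Fontaine--Laffaille applicability; apart from that, the argument is a direct unwinding of definitions and standard exact sequences.
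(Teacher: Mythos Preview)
Your argument is essentially correct and reaches the same conclusion, but there is one inaccuracy worth flagging, and your route differs from the paper's in a way that is worth noting.

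\textbf{The inaccuracy.} In the supersingular case you assert that both Frobenius slopes of $D_{\mathrm{crys}}(V_f^0)$ equal $(k-1)/2$. This is not true in general: supersingular only means $v_p(a_p)>0$, and the Newton polygon of $X^2-a_pX+p^{k-1}$ then gives slopes $v_p(a_p)$ and $k-1-v_p(a_p)$, which need not coincide. What \emph{is} true, and what your weak-admissibility argument actually needs, is that both slopes lie in the open interval $(0,k-1)$; hence neither can equal a Hodge--Tate weight $0$ or $k-1$, and no one-dimensional crystalline subrepresentation exists. With this correction your proof of $V^{G_{\Q_p}}=0$ goes through (and in fact works for all $k$, not just $k\le p+1$).

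\textbf{Comparison with the paper.} The paper argues more economically: it invokes Fontaine--Edixhoven once to get irreducibility of $M|_{G_{\Q_p}}$, and then deduces \emph{both} conclusions from that single input. Irreducibility of the residual representation forces irreducibility of $V|_{G_{\Q_p}}$ (a reducible $V$ would yield a saturated sublattice and hence a proper submodule of $M$), giving $V^{G_{\Q_p}}=0$; and $M^{G_{\Q_p}}=0$ directly gives $A^{G_{\Q_p}}=0$ by the elementary observation that any nonzero $x\in A^{G_{\Q_p}}$ would produce a nonzero $p^m x\in M^{G_{\Q_p}}$. Your route instead proves $V^{G_{\Q_p}}=0$ by an independent $p$-adic Hodge-theoretic argument and then uses this to show $A^{G_{\Q_p}}$ is finite before appealing to $M^{G_{\Q_p}}=0$. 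Both are valid; the paper's is shorter and avoids filtered $\varphi$-modules entirely, while yours has the minor advantage that the $V$-part does not require the bound $k\le p+1$.
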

We use the following result of Fontaine and Edixhoven.
\begin{thm*}[Fontaine-Edixhoven, \cite{Edi}, Theorem 2.6]\mbox{}\\
Suppose that $f$ is supersingular at $p$, and that the weight $k$ of $f$ satisfies $2\leqslant k\leqslant p+1$. Then $\bar{\rho}_f^0|_{G_{\mathbb{Q}_p}}$ is irreducible.
\end{thm*}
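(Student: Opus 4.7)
The plan is to deduce everything from the cited Fontaine--Edixhoven theorem, which asserts that $\bar\rho_f^0|_{G_{\Q_p}}$ is irreducible under the hypotheses $f$ supersingular at $p$ and $2 \leqslant k \leqslant p+1$. All three vanishings in the statement will follow from a single residual input: that $M_{f,D}^{G_{\Q_p}} = 0$.

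First I would pass from $\bar\rho_f^0$ to $\bar\rho_{f,D} = \bar\rho_f^0 \otimes \omega_{\cyc}^{1-k/2} \otimes \chi_D$. Since twisting a representation by a character does not affect its lattice of subrepresentations, the restriction $\bar\rho_{f,D}|_{G_{\Q_p}}$ is still irreducible. As it is two-dimensional, the subspace of $G_{\Q_p}$-invariants is either $0$ or all of $M_{f,D}$; the latter would force $G_{\Q_p}$ to act trivially, contradicting irreducibility. Hence $M_{f,D}^{G_{\Q_p}} = 0$.

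Next I would upgrade this vanishing to the characteristic-zero statement by a standard lattice argument. Suppose for contradiction that $V_{f,D}^{G_{\Q_p}} \neq 0$ and pick a nonzero $v$ in it. Rescaling by a power of $p$, one may assume $v \in T_{f,D} \setminus p T_{f,D}$, and $v$ remains $G_{\Q_p}$-fixed. Its image in $M_{f,D} = T_{f,D}/pT_{f,D}$ is then a nonzero $G_{\Q_p}$-invariant, contradicting the previous step. Therefore $V_{f,D}^{G_{\Q_p}} = 0$.

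Finally, for $A_{f,D}^{G_{\Q_p}} \otimes \F_p$, I use that $A_{f,D}$ is a $p$-torsion divisible group. Any nonzero element $x \in A_{f,D}^{G_{\Q_p}}$ has order $p^n$ for some $n \geqslant 1$, and then $p^{n-1} x$ is a nonzero element of $A_{f,D}^{G_{\Q_p}}[p] = M_{f,D}^{G_{\Q_p}}$. Since the latter is zero, $A_{f,D}^{G_{\Q_p}}$ itself must vanish, and in particular $A_{f,D}^{G_{\Q_p}} \otimes \F_p = 0$. There is no real obstacle in the argument beyond correctly invoking Fontaine--Edixhoven and carrying out the elementary lattice/torsion bookkeeping; the only point demanding care is the remark that twisting by a character preserves irreducibility so that the cited theorem, stated for $\bar\rho_f^0$, transfers to the twisted representation $\bar\rho_{f,D}$ governing $M_{f,D}$.
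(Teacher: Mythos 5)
The statement you were asked to prove is the Fontaine--Edixhoven theorem itself: that for $f$ supersingular at $p$ with $2\leqslant k\leqslant p+1$, the restriction $\bar\rho_f^0|_{G_{\Q_p}}$ is irreducible. Your very first sentence takes exactly this as an input (``deduce everything from the cited Fontaine--Edixhoven theorem, which asserts that $\bar\rho_f^0|_{G_{\Q_p}}$ is irreducible''), so as a proof of the stated theorem the argument is circular: the conclusion is assumed at the outset and never established. What you have actually written is a proof of the \emph{consequence} the paper draws from this theorem, namely Proposition~\ref{compss}, asserting $M_{f,D}^{G_{\Q_p}}=V_{f,D}^{G_{\Q_p}}=A_{f,D}^{G_{\Q_p}}\otimes\F_p=0$; for that proposition your steps (twisting by a character preserves irreducibility, irreducibility of a two-dimensional representation kills invariants, the rescaling argument inside a lattice for $V$, and the $p^{n-1}x$ torsion argument for $A$) match the paper's own proof essentially verbatim. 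Note that the paper does not prove the quoted theorem either --- it cites \cite{Edi}, Theorem 2.6 --- so there is no internal proof to reproduce, but a blind proof attempt still has to supply an argument rather than an appeal to the result itself.

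A genuine proof requires local input entirely absent from your write-up. The standard argument (due to Fontaine, written up by Edixhoven) shows that in the supersingular case with $2\leqslant k\leqslant p+1$ the restriction of $\bar\rho_f^0$ to the inertia group $I_p$ is the direct sum $\psi_2^{k-1}\oplus(\psi_2')^{k-1}$, where $\psi_2$ and $\psi_2'=\psi_2^{p}$ are the two fundamental characters of level $2$; equivalently, $\bar\rho_f^0|_{G_{\Q_p}}$ is induced from a character of the Galois group of the unramified quadratic extension of $\Q_p$. Since $1\leqslant k-1\leqslant p$, one has $(p+1)\nmid(k-1)$, hence $\psi_2^{k-1}\neq(\psi_2')^{k-1}$; the Frobenius conjugation swaps the two inertia eigenlines, so no line is $G_{\Q_p}$-stable and the representation is irreducible. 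Establishing the inertia description in the first place uses the theory of finite flat group schemes (for $k=2$) and Fontaine--Laffaille or $\theta$-cycle/crystalline techniques for higher weights, together with the hypothesis $a_p\equiv 0$; none of the elementary lattice or torsion manipulations in your proposal can substitute for this.
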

\begin{rem}
Actually, their result describes a more precise local behavior of $\bar{\rho}_f^0$ at a supersingular prime. See for example \cite{Edi}.
\end{rem}

\begin{proof}[Proof of Proposition {\rm \ref{compss}}]
From the theorem of Fontaine and Edixhoven, $M^0_f$ and its twist $M=M_{f, D}$ are irreducible $G_{\Q_p}$-modules. Hence, we obtain $M^{G_{\Q_p}}=0$ which implies  $A^{G_{\Q_p}}=0$ by the same argument as in the proof of Lemma \ref{suftam}. Thus, we obtain $A^{G_{\Q_p}}\otimes \F_p=0$. Since the residual representation $M$ is irreducible, the $p$-adic representation $V$ is also irreducible as a $G_{\Q_p}$-module which implies $V^{G_{\Q_p}}=0$.
\end{proof}

\subsection{Ordinary case}
\begin{prop}\label{compV}
Suppose $f$ is ordinary at $p$. Then we have $V^{G_{\Q_p}}=0.$
\end{prop}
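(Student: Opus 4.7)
The plan is to exploit the reducibility of $\rho_f^0|_{G_{\Q_p}}$ in the ordinary case. Recall that for $f$ ordinary at $p$, the theorem of Mazur--Wiles furnishes a short exact sequence of $G_{\Q_p}$-modules
\[
0\to V_f^{0,+} \to V_f^0|_{G_{\Q_p}} \to V_f^{0,-} \to 0,
\]
in which $V_f^{0,-}$ is one-dimensional and unramified with $\Frob_p$ acting by the $p$-adic unit root $\alpha_p$ of $X^2-a_pX+p^{k-1}$, and $V_f^{0,+}$ affords the character $\chi_{\cyc}^{k-1}\psi$, where $\psi$ is the unramified character sending $\Frob_p$ to $\alpha_p^{-1}$. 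Twisting by $\chi_{\cyc}^{1-k/2}\chi_D$ then yields an exact sequence
\[
0 \to V^+ \to V \to V^- \to 0
\]
of $G_{\Q_p}$-modules, with $V^+$ a character of Hodge--Tate weight $k/2$ and $V^-$ a character of Hodge--Tate weight $1-k/2$.

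Because the functor of $G_{\Q_p}$-invariants is left exact, it will be enough to show that neither $V^+$ nor $V^-$ admits a nonzero invariant vector. The key fact here is that any Hodge--Tate character of $G_{\Q_p}$ with nonzero weight is already nontrivial on the inertia subgroup $I_p$, because $\chi_{\cyc}|_{I_p}$ has infinite torsion-free image $1+p\Z_p$ for odd $p$. This immediately disposes of $V^+$, whose weight $k/2\geqslant 1$ is positive, and of $V^-$ whenever $k\geqslant 4$, where the weight $1-k/2$ is a negative integer.

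The remaining case, and the principal technical obstacle, is $k=2$, in which $V^-$ is an unramified character times $\chi_D$ and has Hodge--Tate weight $0$. If $p\mid D$ then $\chi_D$ is ramified at $p$, so $V^-$ is ramified and therefore nontrivial. Otherwise $V^-$ is unramified with $V^-(\Frob_p)=\alpha_p\chi_D(\Frob_p)$, so $(V^-)^{G_{\Q_p}}\neq 0$ would force $\alpha_p=\pm 1$, hence $a_p=\pm(1+p)$, contradicting the Hasse bound $|a_p|\leqslant 2\sqrt{p}$. With both characters shown to have no invariants, left exactness yields $V^{G_{\Q_p}}=0$, completing the proof.
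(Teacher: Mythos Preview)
Your proof is correct and follows essentially the same approach as the paper: both exploit the ordinary filtration of $V|_{G_{\Q_p}}$ and reduce to showing that neither the sub- nor the quotient character is trivial. The packaging differs only slightly---you argue via left-exactness of invariants and Hodge--Tate weights, handling the residual $k=2$ case with the explicit Hasse bound $|a_p|\leqslant 2\sqrt{p}$, whereas the paper assumes $V^{G_{\Q_p}}\neq 0$, deduces a splitting, compares determinants to obtain $\chi_{\cyc}^{1-k/2}\psi=\chi_D$, and then invokes the infinite order of $\chi_{\cyc}$ on inertia and of the unramified character $\psi$; your Hasse-bound step makes explicit what underlies the paper's assertion that $\psi$ has infinite order.
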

\begin{proof}
We know $g \in G_{\Q_p}$ acts on $V=V_{f, D}=V_f^0(\chi_{\cyc}^{1-k/2}\otimes\chi_D)$ as 
\begin{eqnarray}\label{actV}
 \left(
 \begin{array}{cc}
 \chi_{\cyc}^{k/2}(g)\psi^{-1}(g) &u(g) \\
 0&\psi(g)\chi_{\cyc}^{1-k/2}
 \end{array}
 \right)\cdot \chi_{D}(g)
\end{eqnarray}
for a suitable basis of $V$. Here, $\psi: G_{\Q_p} \rightarrow \mathbb{Z}^{\times}_p$ is an unramified character and $u(g) \in \Z_p$. We have a subspace $W_1$ of $V$ on which $G_{\Q_p}$ acts via the character $\chi_{\cyc}^{k/2}\chi_{D}\psi^{-1}$. If $V^{G_{\Q_p}}\neq 0$, then $V^{G_{\Q_p}}$ is 1-dimensional and linearly independent with $W_1$ since $\chi_{\cyc}^{k/2}\chi_{D}\psi^{-1}\neq 1$. Then the matrix (\ref{actV}) is similar to 
\begin{eqnarray*}
 \left(
 \begin{array}{cc}
 \chi_{\cyc}^{k/2}(g)\chi_D\psi^{-1}(g) &0 \\
 0&1
 \end{array}
 \right).
\end{eqnarray*}
Taking determinants of above two matrices, we have $\chi_{\cyc} =  \chi_{\cyc}^{k/2}\chi_D\psi^{-1}$ which implies $\chi_{\cyc}^{1-k/2}\psi=\chi_D$. However, this is a contradiction since $\chi_{\cyc}$ is a ramified infinite order character and $\psi$ is an unramified infinite order character. Hence, we obtain $V^{G_{\Q_p}}=0$.
\end{proof}

 \begin{prop}\label{comp}
 Suppose that $f$ is ordinary at $p$ and $p-1\nmid k-1$. Then we have $M^{G_{\Q_p}}=0$ if and only if neither of the following situations  happen.

 \noindent$($When $p\nmid D)$
 \begin{itemize}
  \item[{\rm (i)}] $M$ splits as a $G_{\Q_p}$-module, $p-1$ divides one of $k/2$ or $1-k/2$, 
 
 \hspace{-6mm}and $a_p(f) \equiv 1 \pmod p$.
 \item[{\rm (ii)}] $M$ does not split as a $G_{\Q_p}$-module, $p-1\mid k/2$ and $a_p(f) \equiv 1 \pmod p$.
 \end{itemize}
 \noindent$($When $D=p^{\ast}:=(-1)^{\frac{p-1}{2}}p)$
 \begin{itemize}
 \item[{\rm (iii)}] $M$ splits as a $G_{\Q_p}$-module, $p-1$ divides one of $\frac{k-p+1}{2}$ or $\frac{k+p-3}{2}$,
 
 \hspace{-7mm} and $a_p(f) \equiv 1 \pmod p$.
 \item[{\rm (iv)}] $M$ does not split as a $G_{\Q_p}$-module, $p-1 \mid \frac{k-p+1}{2}$ and $a_p(f) \equiv 1 \pmod p$.
  \end{itemize}
  Here, $a_p(f)$ is the $p$-th Fourier coefficient of $f$.
  \end{prop}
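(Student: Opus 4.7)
My plan is to exploit the explicit local form of $V = V_{f,D}$ at $G_{\Q_p}$ given by (\ref{actV}). Reducing mod $p$, for a suitable basis the representation $M|_{G_{\Q_p}}$ is upper triangular with sub-character
\[
\chi_1 := \omega_{\cyc}^{k/2}\,\bar\psi^{-1}\,\chi_D
\]
and quotient character
\[
\chi_2 := \bar\psi\,\omega_{\cyc}^{1-k/2}\,\chi_D,
\]
where $\bar\psi$ denotes the mod $p$ reduction of the unramified character $\psi$ and satisfies $\bar\psi(\Frob_p)\equiv a_p(f)\pmod p$ by the standard description of ordinary Galois representations of modular forms. Note that $\chi_1\chi_2 = \omega_{\cyc}$ (using $\chi_D^2 = 1$), and that the hypothesis $p-1\nmid k-1$ forces $\chi_1\ne\chi_2$ (otherwise $\bar\psi^2 = \omega_{\cyc}^{k-1}$, but the left side is unramified while the right side is ramified).

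Next I would compute $M^{G_{\Q_p}}$ from the short exact sequence $0\to\chi_1\to M\to\chi_2\to 0$, splitting the argument according to whether $M$ splits as a $G_{\Q_p}$-module. In the split case $M=\chi_1\oplus\chi_2$, and hence $M^{G_{\Q_p}}\neq 0$ iff $\chi_1=1$ or $\chi_2=1$. In the non-split case I claim $M^{G_{\Q_p}}\neq 0$ iff $\chi_1=1$: the implication $(\Leftarrow)$ is immediate since then the sub $\chi_1\subseteq M$ contributes a fixed vector; for $(\Rightarrow)$, the long exact sequence of Galois cohomology gives an injection $M^{G_{\Q_p}}/\chi_1^{G_{\Q_p}}\hookrightarrow \chi_2^{G_{\Q_p}}$ whose image lies in the kernel of the connecting map to $H^1(\Q_p,\chi_1)$, and when $\chi_1\neq 1$ but $\chi_2=1$ a nonzero element of $\chi_2^{G_{\Q_p}}$ is sent to the (nonzero) class of the non-split extension, so no fixed lift to $M$ exists.

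Finally I would translate the triviality of $\chi_1$ and $\chi_2$ into the concrete conditions listed. Since $\bar\psi$ is unramified and the character $\omega_{\cyc}$ has order exactly $p-1$ as a character of $G_{\Q_p}$, triviality of $\chi_i$ unwinds into a divisibility condition on the exponent of $\omega_{\cyc}$ (absorbing the ramified part of $\chi_D$) together with a Frobenius congruence on $\bar\psi$. When $p\nmid D$, $\chi_D$ is unramified at $p$, so $\chi_1=1$ (resp.\ $\chi_2=1$) reduces to $p-1\mid k/2$ (resp.\ $p-1\mid 1-k/2$) together with the congruence on $a_p(f)$ appearing in (i), (ii). When $D=p^*$, we have $\chi_D|_{G_{\Q_p}}=\omega_{\cyc}^{(p-1)/2}$, and the exponents combine as $k/2+(p-1)/2=(k+p-1)/2\equiv(k-p+1)/2\pmod{p-1}$ and $(1-k/2)+(p-1)/2=(p-k+1)/2\equiv-(k+p-3)/2\pmod{p-1}$; hence the divisibility conditions shift to $p-1\mid(k-p+1)/2$ and $p-1\mid(k+p-3)/2$ respectively, and the Frobenius condition simplifies to $\bar\psi=1$, i.e.\ $a_p(f)\equiv 1\pmod p$. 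Cross-tabulating with the split/non-split dichotomy from the previous step yields exactly the four configurations (i)--(iv).

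I expect the main obstacle to be the careful bookkeeping in the ramified case $D=p^*$: the ramified part of $\chi_D$ combines with $\omega_{\cyc}^{k/2}$ (resp.\ $\omega_{\cyc}^{1-k/2}$), and the resulting exponent must be reduced modulo $p-1$ and rewritten in the precise forms $(k-p+1)/2$ and $(k+p-3)/2$ appearing in the statement. The hypothesis $p-1\nmid k-1$ plays the supporting role of ensuring $\chi_1\neq\chi_2$, so that at most one of the diagonal characters can be trivial at a time and the split/non-split analysis aligns unambiguously with the four cases.
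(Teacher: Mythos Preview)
Your approach is correct and takes a genuinely different (and cleaner) route in the non-split case. The paper proceeds by explicit matrix computation: in the non-split case it first proves an auxiliary lemma (Lemma~\ref{nonsplitproperty}, relying on a Borel-subgroup computation from \cite{LR}) that non-splitting over $G_{\Q_p}$ persists over $G_{\Q_p^{\mathrm{ab}}}$, then uses this to pin down $M^{G_{\Q_p^{\mathrm{ab}}}}$ and descend through $I_p$ and $\Frob_p$. Your connecting-map argument bypasses this entirely: the long exact sequence $0\to\chi_1^{G_{\Q_p}}\to M^{G_{\Q_p}}\to\chi_2^{G_{\Q_p}}\xrightarrow{\delta} H^1(\Q_p,\chi_1)$ identifies $\delta$ (when $\chi_2=1$) with the extension class $[M]\in\mathrm{Ext}^1(\mathbf{1},\chi_1)=H^1(\Q_p,\chi_1)$, which is nonzero precisely because $M$ is non-split. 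This is more conceptual and does not need the hypothesis $p-1\nmid k-1$ for the non-split step itself (the paper uses that hypothesis essentially inside Lemma~\ref{nonsplitproperty}); in your argument it only serves to keep $\chi_1\neq\chi_2$ so that the four listed cases are disjoint.

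One omission: you treat only $p\nmid D$ and $D=p^\ast$, but the proposition as stated implicitly asserts $M^{G_{\Q_p}}=0$ whenever $p\mid D$ with $D\neq p^\ast$; the paper handles this separately as its Case~2. In your framework this is the situation where $\chi_D|_{G_{\Q_p}}$ is the ramified quadratic character other than $\omega_{\cyc}^{(p-1)/2}$, and you should run the same $\chi_1,\chi_2$ analysis there to close the argument. A smaller point: in the $p\nmid D$ case the unramified part of $\chi_i$ carries the factor $\chi_D(\Frob_p)\in\{\pm 1\}$, so the Frobenius condition you actually obtain is $a_p(f)\equiv\chi_D(\Frob_p)\pmod p$; make this explicit rather than deferring to ``the congruence appearing in (i), (ii)''.
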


\begin{proof}
 Taking a suitable basis \{$m_1, m_2$\} for $M=M_{f, D}$ over $\F_p$, we know $g \in G_{\Q_p}$ acts on $M$ as 
\begin{eqnarray}\label{actM}
 \left(
 \begin{array}{cc}
 \omega_{\cyc}^{k/2}(g)\bar{\psi}^{-1}(g) &\bar{u}(g) \\
 0&\bar{\psi}(g)\omega_{\cyc}^{1-k/2}(g)
 \end{array}
 \right)\cdot \chi_{D}(g),
\end{eqnarray}
where $\psi$ and $u(g) \in \Z_p$ are as in (\ref{actV}) and $\bar{\psi}$ and $\bar{u}(g)$ are their reduction mod $p$. We show Proposition \ref{comp} by a case-by-case computation.

\vspace{1mm}
(Case 1 : $p\nmid D$)\ \ In this case, we have $\sqrt{D}\in \Q^{\ur}_p$. By (\ref{actM}), an element $g$ in the inertia subgroup $I_p$ at $p$ acts on $M$ via a matrix
\begin{eqnarray*}
 \left(
 \begin{array}{cc}
 \omega_{\mathrm{cyc}}^{k/2}(g) &\bar{u}(g) \\
 0&\omega_{\mathrm{cyc}}^{1-k/2}(g)
 \end{array}
 \right).
\end{eqnarray*}

First, suppose that $M$ splits as a $G_{\mathbb{Q}_p}$-module, or in other words, suppose $\bar{u}=0$ holds. For $a, b \in \F_p$, an element $x=am_1+bm_2\in M$ and $g \in I_p$, we have
\[
g(x)=g(am_1+bm_2)=a\omega_{\cyc}^{k/2}(g)m_1+b\omega_{\cyc}^{1-k/2}(g)m_2.
\]
Thus, $x \in M^{I_p}$ if and only if 
\begin{eqnarray}\label{invcondisplitcase}
a\omega_{\mathrm{cyc}}^{k/2}(g)=a,\ b\omega_{\mathrm{cyc}}^{1-k/2}(g)=b\ \ \text{for all $g \in I_p$}.
\end{eqnarray}
If $p-1\nmid k/2$ and $p-1 \nmid 1-k/2$, (\ref{invcondisplitcase}) implies $a=b=0$ and $M^{G_{\Q_p}}\subset M^{I_p}=0$. Suppose $p-1\mid k/2$ or $p-1 \mid 1-k/2$ holds. Then we obtain $M^{I_p}=\F_p m_1$ or $\F_p m_2$ respectively. The Frobenius element $\Frob_p$ acts on $\F_p m_1$ and $\F_p m_2$ via multiplication by $a_p(f)^{-1}$ mod $p$ and $a_p(f)$ mod $p$ respectively, where $a_p(f)$ is the $p$-th Fourier coefficient of $f$. Thus, $M^{G_{\Q_p}}= (M^{I_p})^{\Frob_p=1}=0$ if $a_p(f) \not\equiv 1 \pmod p$. When $a_p(f) \equiv 1 \pmod p$, which is the situation (i) in Proposition \ref{comp}, then $M^{G_{\Q_p}}=\F_p m_1$ or $\F_p m_2$. \\

Next, suppose $M$ does not split as a $G_{\Q_p}$-module, we use the following lemma.
\begin{lem}\label{nonsplitproperty}
Suppose that $M$ does not split as a $G_{\Q_p}$-module. If $p-1\nmid k-1$, then $M$ still does not split when restricted to $G_{\Q^{\mathrm{ab}}_p}$
, where $\Q^{\mathrm{ab}}_p$ denotes the maximal abelian extension of $\Q_p$.
\end{lem}

We prove this lemma later. From this lemma and its proof, we can retake a suitable basis of $M$ such that $G_{\Q_p}$ acts on $M$ also as (\ref{actM}) and $\bar{u}(G_{\Q^{\mathrm{ab}}_p})\neq 0$. Then $g \in G_{\Q^{\mathrm{ab}}_p}$ acts on $M$ via $
 \left(
 \begin{array}{cc}
 1 &\bar{u}(g) \\
 0&1
 \end{array}
 \right).
$
Hence, we can show that an element $x=am_1+bm_2 \in M$ is fixed by $G_{\Q^{\mathrm{ab}}_p}$ if and only if 
\[
a=a+b\bar{u}(g) \ \ \text{for all $g \in G_{\Q^{\mathrm{ab}}_p}$}.
\]
Since $\bar{u}(G_{\Q^{\mathrm{ab}}_p})\neq 0$, we obtain $b=0$ and $M^{I_p}=(M^{G_{\Q^{\mathrm{ab}}_p}})^{I_p}=(\F_p m_1)^{I_p}$. Here, $I_p$ acts on $\F_p m_1$ via the character $\omega_{\cyc}^{k/2}$ by (\ref{actM}). Hence, if $p-1 \nmid k/2$, then  $M^{G_{\Q_p}}\subset M^{I_p}=0$. Suppose $p-1 \mid k/2$. Then $M^{I_p}=\F_p m_1$ and $\Frob_p$ acts on this space via multiplication by $a_p(f)^{-1}$ mod $p$. Thus, if $a_p(f) \not\equiv 1 \pmod p$, we have $M^{G_{\Q_p}}= (M^{I_p})^{\Frob_p=1}=0$. When $a_p(f) \equiv 1 \pmod p$, which is the situation (ii), then $M^{G_{\Q_p}}=\F_p m_1$.

(Case 2 : $p\mid D$ and $D\neq p^{\ast}$)\ \ In this case, $\Q^{\ur}_p(\sqrt{D})$ and $\Q^{\mathrm{ur}}_p(\zeta_p)$ are linearly disjoint over $\Q^{\ur}_p$. Hence, there exists $\sigma \in G_{\Q^{\ur}_p(\zeta_p)}$ such that $\chi_{D}(\sigma)=-1$. By (\ref{actM}), $g \in G_{\Q^{\ur}_p(\zeta_p)}$ acts on $M$ via 
\begin{eqnarray*}
 \left(
 \begin{array}{cc}
 1 &\bar{u}(g) \\
 0&1
 \end{array}
 \right)\cdot \chi_{D}(g).
\end{eqnarray*}
Thus, an element $x=am_1+bm_2\in M$ fixed by $G_{\Q^{\ur}_p(\zeta_p)}$ if and only if 
\[
\chi_D(g)(a+b\bar{u}(g))=a ,\ \chi_D(g) b=b \ \ \text{for all $g \in G_{\Q^{\ur}_p(\zeta_p)}$}.
\]
Putting $g=\sigma$, we obtain $b=0$ from the second equality since $p$ is odd and then we get $a=0$ from the first equality. Thus, we have $M^{G_{\Q_p}}\subset M^{G_{\Q^{\ur}_p(\zeta_p)}}=0$.

(Case 3 : $D=p^{\ast}$)\ \ In this case, we have an inclusion $\Q^{\ur}_p(\sqrt{D})\subset \Q^{\ur}_p(\zeta_p)$. Suppose that $M$ splits as a $G_{\Q_p}$-module. As the calculation in (Case 1), an element $x=am_1+bm_2 \in M$ is fixed by $I_p$ if and only if 
\begin{eqnarray}\label{invcondisplit}
a\omega_{\cyc}^{k/2}(g)\chi_D(g)=a,\ b\omega_{\cyc}^{1-k/2}(g)\chi_D(g)=b\ \ \text{for all $g \in I_p$}.
\end{eqnarray}
This condition is equivalent to the condition that the same equations hold for a generator $\tau$ of the Galois group $\Gal(\Q^{\ur}_p(\zeta_p)/\Q^{\ur}_p)$ since both $\omega_{\cyc}$ and $\chi_D$ are trivial on $G_{\Q_p^{\ur}(\zeta_p)}$. We know that $\chi_D(\tau)=-1$ and $\omega_{\cyc}(\tau)$ has order $p-1$. Thus, (\ref{invcondisplit}) is equivalent to the condition
\[
-a\omega_{\cyc}^{k/2}(\tau)=a,\ -b\omega_{\cyc}^{1-k/2}(\tau)=b.
\]
If $\frac{p-1}{2}\not\equiv k/2 \pmod{p-1}$ and $\frac{p-1}{2}\not\equiv 1-k/2 \pmod{p-1}$, then $a=b=0$ from the above condition and we get $M^{G_{\Q_p}}\subset M^{I_p}=0$. Suppose $\frac{p-1}{2}\equiv k/2 \pmod{p-1}$ or $\frac{p-1}{2}\equiv 1-k/2 \pmod{p-1}$ holds. Then $M^{I_p}=\F_p m_1$ or $\F_p m_2$ respectively. As in the argument in (Case 1), if $a_p(f) \not\equiv 1 \pmod p$, then $M^{G_{\Q_p}}= (M^{I_p})^{\Frob_p=1}=0$. When $a_p(f) \equiv 1 \pmod p$, which is the situation (iii), $M^{G_{\Q_p}}=\F_p m_1$ or $\F_p m_2$. 

Finally, suppose that $M$ does not split as a $G_{\Q_p}$-module. By the same argument as in (Case 1), we can show that an element $x=am_1+bm_2 \in M$ is fixed by $G_{\Q^{\mathrm{ab}}_p}$ if and only if 
\[
a=a+b\bar{u}(g) \ \ \text{for all $g \in G_{\Q^{\mathrm{ab}}_p}$}.
\]
This implies that $b=0$ from Lemma \ref{nonsplitproperty} after retaking a suitable basis of $M$. Then 
$$M^{I_p}=(M^{G_{\Q^{\mathrm{ab}}_p}})^{I_p}=(\F_p m_1)^{I_p}$$ and $I_p$ acts on $\F_p m_1$ via $\omega_{\cyc}^{k/2}\chi_D$ by (\ref{actM}). So we can see that $x=am_1\in M^{G_{\Q^{\mathrm{ab}}_p}}$ is fixed by $I_p$ if and only if 
\[
-a \omega_{\cyc}^{k/2}(\tau)=a.
\]
 If $\frac{p-1}{2}\not\equiv k/2 \pmod{p-1}$, then $a=0$ from this equality and we obtain 
 $$M^{G_{\Q_p}}\subset M^{I_p}=0.$$
 Suppose that $\frac{p-1}{2}\equiv k/2 \pmod{p-1}$. Then $M^{I_p}=\F_p m_1$ and we have 
 $$M^{G_{\Q_p}}= (M^{I_p})^{\Frob_p=1}=(\F_p m_1)^{\Frob_p=1}.$$
 This is trivial if $a_p(f) \not\equiv 1 \pmod p$, or otherwise we have $M^{G_{\Q_p}}=\F_p m_1$ (situation (iv)).
 The proof of Proposition \ref{comp} is complete. 
 \end{proof}

Finally, we prove Lemma \ref{nonsplitproperty} by using the following fact.

\begin{prop}[\cite{LR}, Lemma 2.2]\label{benri}
Let $p>2$ be a prime number and $B \subset \mathrm{GL}_2(\F_p)$ a Borel subgroup such that $B$ contains a matrix $g=\left(
 \begin{array}{cc}
 a&b\\
 0& c
 \end{array}
 \right)$
 with $a\neq c$. Let $B^{\prime}:=h^{-1}Bh$ with 
 $h=\left(
 \begin{array}{cc}
 1&b/(c-a)\\
 0& 1
 \end{array}
 \right)$. 
 \begin{itemize}
\item[$(1)$]
We can decompose $B^{\prime}$ as
\[
B^{\prime}=B^{\prime}_d\cdot B^{\prime}_1,
\]
and $B/[B, B] \cong B^{\prime}/[B^{\prime}, B^{\prime}]$. Here the groups $B^{\prime}_d, B^{\prime}_1$ are defined as
\[
B^{\prime}_d=B^{\prime} \cap 
\left\{
\left(
 \begin{array}{cc}
 a&0\\
 0& c
 \end{array}
 \right) \middle|\ a,c \in \F_p^{\times}
 \right\},\ \ 
B^{\prime}_1=B^{\prime} \cap 
\left\{
\left(
 \begin{array}{cc}
 1&b\\
 0&1
 \end{array}
 \right) \middle|\ b \in \F_p
 \right\}.
\]
\item[$(2)$]
$[B^{\prime}, B^{\prime}]=B^{\prime}_1$.
\end{itemize}
\end{prop}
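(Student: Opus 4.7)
My plan is to reduce the statement to the classical structure of the standard upper triangular Borel via the conjugation $M \mapsto h^{-1}Mh$.

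First I would verify by a direct matrix calculation that $h^{-1}gh = \mathrm{diag}(a,c)$, so $B'$ contains a non-scalar diagonal element. Since $B'$ is a Borel subgroup of $\mathrm{GL}_2(\F_p)$ (conjugates of Borels are Borels), it is the stabilizer of some line in $\F_p^2$, and because $\mathrm{diag}(a,c) \in B'$ has only $\F_p e_1$ and $\F_p e_2$ as stable lines, $B'$ must stabilize one of the two coordinate axes. The specific form of $h$---which satisfies $h(e_1) = e_1$ and $h(e_2) = v := \frac{b}{c-a}e_1 + e_2$ (the second eigenline of $g$)---together with the hypothesis that $B$ already contains the upper triangular $g$ then forces $B'$ to be exactly the standard upper triangular Borel of $\mathrm{GL}_2(\F_p)$.

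Once this identification is in hand, part (1) becomes the classical Levi decomposition $B' = T \ltimes U$ with $T = B'_d$ the diagonal torus and $U = B'_1$ the unipotent radical, every upper triangular matrix factoring uniquely as diagonal times upper unipotent. The isomorphism $B/[B,B] \cong B'/[B',B']$ follows immediately from the group isomorphism $B \cong B'$ induced by conjugation by $h$. For part (2), the inclusion $[B',B'] \subseteq B'_1$ is automatic because the quotient $B'/B'_1 \cong B'_d$ is abelian; the reverse inclusion comes from the commutator computation
\[
\left[\begin{pmatrix} a & 0 \\ 0 & c \end{pmatrix},\ \begin{pmatrix} 1 & y \\ 0 & 1 \end{pmatrix}\right] = \begin{pmatrix} 1 & y(a/c - 1) \\ 0 & 1 \end{pmatrix},
\]
where I use that $a/c - 1 \in \F_p^\times$ (thanks to $a \neq c$ and $p>2$) to conclude that letting $y$ range over $\F_p$ realizes every element of $B'_1$ as a commutator.

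The only real subtlety I anticipate is checking that $B'$ turns out to be upper (rather than lower) triangular---this is precisely what the specific form of $h$, paired with the upper triangular form of $g \in B$, is designed to ensure; after that, the argument is routine structure theory of $T \ltimes U$.
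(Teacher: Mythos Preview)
The paper does not supply its own proof of this proposition; it is quoted verbatim from \cite{LR} and used as a black box in the proof of Lemma~\ref{nonsplitproperty}. So there is no in-paper argument to compare your proposal against.

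That said, your proposal rests on a misreading of the hypothesis. You interpret ``$B$ a Borel subgroup'' as a full Borel (a conjugate of the group of all upper triangular matrices) and then argue that $B'$ must equal the entire standard upper triangular Borel. But look at how the paper \emph{applies} the proposition: it takes $B=\mathcal{D}_p=\bar{\rho}^0_f(G_{\Q_p})$, which is merely some \emph{subgroup} of the upper triangular matrices. Under your reading the hypothesis ``$B$ contains an element with $a\neq c$'' would be vacuous and the conclusion $B'=B'_d\cdot B'_1$ would be the trivial Levi decomposition, so the proposition would have no content. The real assertion is that any subgroup $B$ of upper triangular matrices containing such an element can, after conjugating by $h$, be written as (its diagonal elements)$\cdot$(its unipotent elements); this is not automatic, since a priori neither factor of a given element of $B'$ need lie in $B'$.

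The repair is short. After checking $d_0:=h^{-1}gh=\mathrm{diag}(a,c)\in B'$, take any $g'=\left(\begin{smallmatrix}a'&b'\\0&c'\end{smallmatrix}\right)\in B'$ and compute
\[
[d_0,g']=\begin{pmatrix}1&(a/c-1)\,b'/c'\\0&1\end{pmatrix}\in B'.
\]
If $b'\neq 0$ this is a nontrivial unipotent in $B'$, so $B'_1$ has order $p$ and in particular contains the unipotent factor $\left(\begin{smallmatrix}1&b'/a'\\0&1\end{smallmatrix}\right)$ of $g'$; the diagonal factor then lies in $B'$ as well, giving $B'=B'_d\cdot B'_1$. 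Your commutator computation for part~(2) is correct in spirit but must be run with the specific element $d_0\in B'_d$ (which you know lies in $B'$) rather than an arbitrary diagonal matrix.
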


\begin{proof}[Proof of Lemma {\rm \ref{nonsplitproperty}}]
We show that $g\in G_{\Q_p}$ acts on $M^0_f$ via a matrix
\begin{eqnarray}\label{sayou}
 \left(
 \begin{array}{cc}
 \omega_{\mathrm{cyc}}^{k-1}(g)\bar{\psi}^{-1}(g) &\bar{v}(g) \\
 0&\bar{\psi}(g)
 \end{array}
 \right),
\end{eqnarray}
where $\bar{v}(g) \in \F_p$ and $\bar{v}(G_{\Q^{\mathrm{ab}}_p})\neq 0$, choosing a suitable basis of $M^0_f$. Then this implies especially that $M_{f, D}=M^0_f(\omega_{\cyc}^{1-k/2}\chi_D)$ does not split as a $G_{\Q^{\mathrm{ab}}_p}$-module. We put $$\mathcal{D}_p:=\bar{\rho}^0_f(G_{\Q_p})\subset\mathrm{GL}_2(\F_p).$$
Since we assume $p-1\nmid k-1$, $\omega_{\cyc}^{k-1}$ is not a trivial character. Hence, if we take $g \in I_p$ such that $\omega_{\cyc}^{k-1}(g) \neq 1$, then $A:=\bar{\rho}^0_f(g) ( \in \mathcal{D}_p)$ is of the form $
A= \left(
 \begin{array}{cc}
 a &b \\
 0&1
 \end{array}
 \right)\ \ (a\in\F_p^{\times},\ b \in \F_p)
$
and $a \neq1$. Thus, $\mathcal{D}_p$ satisfies the assumptions in Proposition \ref{benri}. Then we change the basis $\{m_1, m_2\}$ of $M$ by the regular matrix $h=\left(
 \begin{array}{cc}
 1&b(1-a)^{-1}\\
 0& 1
 \end{array}
 \right)$ and write $\mathcal{D}^{\prime}_p = h^{-1}\mathcal{D}_ph$. Note that this change of basis affects only the upper right component of (\ref{actM}). From Proposition \ref{benri}, we have a decomposition of $\mathcal{D}^{\prime}_p$ as 
\begin{eqnarray*}\label{bunkai}
\mathcal{D}^{\prime}_p &=& (\mathcal{D}^{\prime}_p)_d \cdot(\mathcal{D}^{\prime}_p)_1\\
&=&
\left\{\left(
 \begin{array}{cc}
 \omega_{\cyc}^{k-1}(g)\bar{\psi}^{-1}(g)&0\nonumber\\
 0& \bar{\psi}(g)
 \end{array}
 \right) \middle|\ g \in G_{\Q_p} \right\}\cdot
 \left\{\left(
 \begin{array}{cc}
 1&b\\
 0&1
 \end{array}
 \right)\middle|\ b \in \F_p\right\}.
\end{eqnarray*}
Since we assume that $M^0_f$ does not split as a $G_{\Q_p}$-module, we have $(\mathcal{D}^{\prime}_p)_1\neq \{ I \}$. From Proposition \ref{benri} $(2)$, this means that the commutator subgroup of $\mathcal{D}^{\prime}_p$ is non-trivial which implies that $\mathcal{D}^{\prime}_p$ is non-abelian. Therefore, we have the desired result. 
This completes the proof of Proposition \ref{nonsplitproperty}.
\end{proof}

Thus, we get the conditions which implies $V^{G_{\Q_p}}=0$ and $M^{G_{\Q_p}}=(A[p])^{G_{\Q_p}}=0$ in Propositions \ref{compss}, \ref{compV} and \ref{comp}. The latter equality especially implies $A^{G_{\Q_p}}\otimes \F_p=0$. From the inequality (\ref{ineq}), we obtain
\[
\mathrm{dim}_{\F_p}(\mathrm{Im}(\Res^{\ur}_p))\leqslant \mathrm{dim}_{\F_p}(\mathrm{Im}(\mathrm{Loc}_p))\leqslant 1
\]
which is the claim of Step 3 under the assumptions of Propositions \ref{compss} and \ref{comp}. 
\subsection{CM case}
If $f$ is ordinary at $p$, $k=2$ and $T/p^nT$ splits for all $n$, we can show that the above inequality $\mathrm{dim}_{\F_p}(\mathrm{Im}(\Res^{\ur}_p))\leqslant 1$ still holds even when the situation (i) in Proposition \ref{comp} occurs.
\begin{prop}\label{cm}
Suppose that $k=2$. If $p\nmid D$, $f$ is ordinary at $p$, $T/p^nT$ splits as a $G_{\Q_p}$-module for all $n$ and $a_p(f)\equiv 1\pmod p$, then we have $$\mathrm{dim}_{\F_p}(\mathrm{Im}(\Res^{\ur}_p))\leqslant 1.$$
\end{prop}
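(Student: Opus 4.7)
My plan is to promote the hypothesis that $T/p^nT$ splits mod $p^n$ for every $n$ into a genuine $G_{\Q_p}$-equivariant decomposition of $T$, and then use this splitting to decompose the local analysis at $p$ into a ramified and an unramified piece, each of which can be bounded independently. By taking the inverse limit of the compatible decompositions $T/p^nT = T_1^{(n)} \oplus T_2^{(n)}$, I obtain $T = T_1 \oplus T_2$ as $G_{\Q_p}$-modules, with $T_1 \cong \Z_p(\alpha)$ of Hodge--Tate weight $1$ (where $\alpha := \chi_{\cyc}\psi^{-1}\chi_D$) and $T_2 \cong \Z_p(\beta)$ unramified of weight $0$ (where $\beta := \psi\chi_D$). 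The matching of summands with the ordinary filtration is forced by the uniqueness of the unramified rank-one quotient of $V_f^0|_{G_{\Q_p}}$ and by the distinctness of $\alpha,\beta$ modulo $p$ (their ratio is $\chi_{\cyc}$ up to an unramified twist, hence ramified). Because $p \nmid D$, the character $\chi_D$ is unramified at $p$, so $\beta$ is indeed unramified, and $a_p(f) \equiv 1 \pmod p$ forces $\bar\psi = 1$, whence $\bar\beta = \chi_D|_{G_{\Q_p}}$.

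The splitting descends to $V$, $A$, $M$, so $H^1(\Q_p, M)$, the Bloch--Kato local condition, and the restriction to $H^1(\Q_p^{\ur}, M)$ all respect the decomposition, giving
\[
\dim_{\F_p}\mathrm{Im}(\Res^{\ur}_p) \leqslant \sum_{i=1}^{2}\dim_{\F_p}\mathrm{Im}\bigl(H^1_f(\Q_p, M_i) \to H^1(\Q^{\ur}_p, M_i)\bigr).
\]
The key step is to show the $i=2$ summand is zero: since $V_2$ is unramified crystalline of Hodge--Tate weight $0$, the Bloch--Kato fundamental exact sequence yields $H^1_f(\Q_p, V_2) = H^1_{\ur}(\Q_p, V_2)$, and pushing forward by $\pi$ gives $H^1_f(\Q_p, A_2) \subseteq H^1_{\ur}(\Q_p, A_2)$. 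For $c \in H^1_f(\Q_p, M_2)$ we therefore have $i(\Res_{I_p} c) = \Res_{I_p}(i(c)) = 0$ in $H^1(\Q^{\ur}_p, A_2)$; but $A_2^{I_p} = A_2$ is $p$-divisible (as $V_2$ is unramified), so by the local analogue of $(\ref{modploc})$ the map $i: H^1(\Q^{\ur}_p, M_2) \to H^1(\Q^{\ur}_p, A_2)$ is injective, which forces $\Res_{I_p} c = 0$. Hence $H^1_f(\Q_p, M_2) \subseteq H^1_{\ur}(\Q_p, M_2)$ and contributes nothing to $\mathrm{Im}(\Res^{\ur}_p)$.

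For the $i=1$ summand, the formula $(\ref{hodge})$ applied to $V_1$ gives $\dim_{\Q_p}H^1_f(\Q_p, V_1) = 1 + \dim H^0(\Q_p, V_1) = 1$, since the ramified infinite-order character $\alpha$ has no $G_{\Q_p}$-invariants. The same ramification of $\alpha$ on $I_p$ forces $A_1^{G_{\Q_p}} = 0$, so by the identity used to derive $(\ref{ineq2})$ we get $\dim_{\F_p}H^1_f(\Q_p, M_1) = 1$, and its image in $H^1(\Q^{\ur}_p, M_1)$ has dimension at most~$1$. Combining the two bounds proves $\dim_{\F_p}\mathrm{Im}(\Res^{\ur}_p)\leqslant 1$. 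The principal obstacle I expect is the first step: lifting the mod-$p^n$ splittings to a $G_{\Q_p}$-splitting of $T$ compatible with the Greenberg ordinary filtration. This rests on the distinctness of $\alpha$ and $\beta$ modulo $p$, which makes the isotypic decompositions at each finite level canonical and thus allows the splittings to be chosen compatibly as $n$ varies; once this is in place, the remainder is a routine Bloch--Kato calculation.
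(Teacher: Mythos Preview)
Your argument is correct and takes a genuinely different route from the paper's proof. The paper never lifts the splittings to $T$; instead it argues globally on $M$. First it bounds $\dim_{\F_p} H^1_f(\Q_p, M) \leqslant 2$ via the exact sequence $0 \to A^{G_{\Q_p}}\!\otimes\!\F_p \to H^1_f(\Q_p, M) \to H^1_f(\Q_p, A)[p] \to 0$ together with \eqref{ineq2} and the observation (from the proof of Proposition~\ref{comp}) that $A^{G_{\Q_p}}\!\otimes\!\F_p$ is $1$-dimensional in this situation. Then it notes that the level-$n$ splittings force $(T/p^nT)^{I_p}\cong\Z/p^n\Z$ for every $n$, hence $A^{I_p}\cong\Q_p/\Z_p$ and $A^{I_p}\!\otimes\!\F_p=0$; a commutative square then shows that the $1$-dimensional subspace $A^{G_{\Q_p}}\!\otimes\!\F_p\subset H^1_f(\Q_p,M)$ lies in the kernel of $\Res_{\Q_p^{\ur}/\Q_p}$, giving $\dim_{\F_p}\mathrm{Im}(\Res^{\ur}_p)\leqslant 2-1=1$. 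Your approach instead passes to the inverse limit to split $T=T_1\oplus T_2$ and then treats the unramified summand $M_2$ (which lands in $H^1_{\ur}$ and contributes $0$) and the ramified summand $M_1$ (whose $H^1_f$ already has dimension $1$) separately. The paper's argument is more economical, since it only uses the splitting hypothesis to compute $(T/p^nT)^{I_p}$ at each finite level and never needs compatibility between levels; your argument is more structural, identifying exactly which piece carries the possible ramification, and would adapt directly to any setting in which $T$ splits as a $G_{\Q_p}$-module without the detour through the cokernel $A^{G_{\Q_p}}\!\otimes\!\F_p$.
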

\begin{proof}
When the situation (i) in Proposition \ref{comp} occurs, we know that $M^{G_{\Q_p}}$ is 1-dimensional over $\F_p$ from the proof of Proposition \ref{comp}. Then we can see that $A^{G_{\Q_p}}\otimes \F_p$ is also 1-dimensional. In fact, since $V$ does not contain a trivial representation of $G_{\Q_p}$, $A^{G_{\Q_p}}$ can not be $p$-divisible. While, $A^{G_{\Q_p}}$ does not contain any two linearly independent elements over $\Z_p$ by $\mathrm{dim}_{\F_p} M^{G_{\Q_p}}=1$ and the same argument as in the proof of Lemma \ref{suftam}. Hence, $A^{G_{\Q_p}}$ is isomorphic to $\Z/p^m \Z$ for some integer $m$ which implies that $\mathrm{dim}_{\F_p}(A^{G_{\Q_p}}\otimes \F_p)=1$. Thus, we have 
$$
\mathrm{dim}_{\F_p}(\mathrm{Im}(\mathrm{Loc}_p))\leqslant \mathrm{dim}_{\F_p}(H^1_f(\Q_p, M))=1+1=2
$$
from (\ref{ineq}). 

Note that in the situation (i), we assume $p\nmid D$ and hence $\chi_D$ is trivial when restricted to the inertia subgroup $I_p$. Thus, for every $n \in \Z_{\geqslant 1}$, $I_p$ acts on $T/p^nT$ via the diagonal matrix
\begin{eqnarray*}
 \left(
 \begin{array}{cc}
 \omega^{(n)}_{\cyc}(g) &0 \\
 0&1
 \end{array}
 \right),
\end{eqnarray*}
since $k=2$ and $T/p^nT$ splits as a $G_{\Q_p}$-module. Here, $\omega^{(n)}_{\cyc}$ denotes the mod $p^n$ cyclotomic character. Therefore, we obtain 
\[
(A[p^n])^{I_p} \cong (T/p^nT)^{I_p} \cong \Z/p^n\Z
\]
and 
\[
A^{I_p} \otimes \F_p \cong (\Q_p/\Z_p) \otimes \F_p =0.
\]
On the other hand, there is a commutative diagram
\[
  \xymatrix{
 0 \ar[r]^{}&  A^{G_{\Q_p}} \otimes \F_p\cong \Z/p\Z \ar[d]\ar[r]  & H^1_f(\Q_p, M) \ar[d]^{\Res_{\Q_p^{\ur}/\Q_p}} \\
 0 \ar[r]^{}&   A^{I_p} \otimes \F_p=0 \ar[r] &  H^1(\Q^{\ur}_p, M). 
  }
\]
Thus, the restriction map $\Res_{\Q_p^{\ur}/\Q_p}$ is not injective and its kernel has at least dimension 1 over $\F_p$. Since we have a decomposition $\Res^{\ur}_p=\Res_{\Q_p^{\ur}/\Q_p} \circ \mathrm{Loc}_p$, we get the desired inequality $$\mathrm{dim}_{\F_p}(\mathrm{Im}(\Res^{\ur}_p))\leqslant 2-1=1,$$
\end{proof}

\begin{rem}
Let $T$ be the integral $p$-adic Tate module of an elliptic curve $E$ over $\Q_p$, and suppose that $E$ has complex multiplication over an extension of $\Q_p$. Then $T/p^nT$ splits as a $G_{\Q_p}$-module for every $n$. Thus, the splitting condition in Proposition~\ref{cm} holds for $T$. It is conjectured by Ghate that for an ordinary modular form $f$, $T^0_f/p^n$ splits as a $G_{\Q_p}$-module for all $n$ if and only if $f$ has complex multiplication. For details, see \cite{Ga} for example.
\end{rem}

To summarize, we obtain the following proposition.
\begin{prop}\label{st3}
We assume the following{\rm :}
\begin{itemize}
\item If $f$ is supersingular at $p$, then $k\leqslant p+1$ holds.
\item If $f$ is ordinary at $p$, then $p-1\nmid k-1$ and neither of the conditions below occur:\vspace{2mm}

 \noindent$($When $p\nmid D)$
 \begin{itemize}
 \item[{\rm ($\mathrm{i}_1$)}] $k>2$, $M$ splits as a $G_{\Q_p}$-module, $p-1$ divides one of $k/2$ and $1-k/2$, and $a_p(f) \equiv 1 \pmod p$.
 \item[{\rm ($\mathrm{i}_2$)}] $k=2$, $M$ splits as a $G_{\Q_p}$-module and $T/p^nT$ does not split for some $n$, and $a_p(f) \equiv 1 \pmod p$.
 \item[{\rm (ii)}] $M$ does not split as a $G_{\Q_p}$-module, $p-1\mid k/2$ and $a_p(f) \equiv 1 \pmod p$.
  \end{itemize}
 \noindent$($When $D=p^{\ast}:=(-1)^{\frac{p-1}{2}}p.)$
 \begin{itemize}
 \item[{\rm (iii)}] $M$ splits as a $G_{\Q_p}$-module, $p-1$ divides one of $\frac{k-p+1}{2}$ and $\frac{k+p-3}{2}$, and $a_p(f) \equiv 1 \pmod p$.
 \item[{\rm (iv)}] $M$ does not split as a $G_{\Q_p}$-module, $p-1 \mid \frac{k-p+1}{2}$ and $a_p(f) \equiv 1 \pmod p$.
 \end{itemize} 
 \end{itemize}
 Then we have the following inequality{\rm :} $$\mathrm{dim}_{\F_p}(\mathrm{Im}(\Res^{\ur}_p))\leqslant 1.$$
\end{prop}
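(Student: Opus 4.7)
The plan is to assemble the previous propositions into the bound
\[
\dim_{\F_p}(\mathrm{Im}(\Res^{\ur}_p)) \leqslant \dim_{\F_p}(\mathrm{Im}(\mathrm{Loc}_p)) \leqslant \dim_{\F_p}(A^{G_{\Q_p}}\otimes\F_p) + 1 + \dim_{\Q_p} H^0(\Q_p, V),
\]
which is inequality (\ref{ineq}). The strategy is to force the first and third terms on the right to vanish, so that only the ``$+1$'' survives and yields the desired inequality directly; the only exceptional situation must then be handled by a separate argument.

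First I would treat the supersingular case. Under the hypothesis $k\leqslant p+1$, Proposition~\ref{compss} delivers $V^{G_{\Q_p}}=0$ and $A^{G_{\Q_p}}\otimes\F_p=0$ simultaneously, and (\ref{ineq}) instantly gives $\dim_{\F_p}(\mathrm{Im}(\mathrm{Loc}_p))\leqslant 1$. In the ordinary case under $p-1\nmid k-1$, Proposition~\ref{compV} already provides $V^{G_{\Q_p}}=0$, so the task reduces to bounding $\dim_{\F_p}(A^{G_{\Q_p}}\otimes\F_p)$. If none of the four situations (i)--(iv) of Proposition~\ref{comp} occurs, then $M^{G_{\Q_p}}=0$; by the same divisibility argument used in Proposition~\ref{suftam} (any nonzero element of $A^{G_{\Q_p}}$ produces, after clearing denominators, a nonzero element of $A[p]^{G_{\Q_p}}=M^{G_{\Q_p}}$), this forces $A^{G_{\Q_p}}=0$, and (\ref{ineq}) closes the case.

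The only scenario compatible with the hypotheses of Proposition~\ref{st3} but not covered above is the CM-type case: $k=2$, $p\nmid D$, $M$ splits as a $G_{\Q_p}$-module, $a_p(f)\equiv 1 \pmod p$, and $T/p^n T$ splits as a $G_{\Q_p}$-module for every $n$ (the alternative $T/p^n T$ failing to split for some $n$ is precisely case ($\mathrm{i}_2$), which is excluded by hypothesis). Here $A^{G_{\Q_p}}\otimes\F_p$ is one-dimensional, so (\ref{ineq}) only yields $\dim_{\F_p}(\mathrm{Im}(\mathrm{Loc}_p))\leqslant 2$; this is exactly the setting of Proposition~\ref{cm}. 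Its content is that the full splitting of $T/p^nT$ at all levels makes $A^{I_p}\otimes\F_p$ vanish, so the one-dimensional contribution of $A^{G_{\Q_p}}\otimes\F_p$ lies entirely in the kernel of $\Res_{\Q_p^{\ur}/\Q_p}$, and a small commutative-diagram chase recovers the missing dimension when passing from $\mathrm{Loc}_p$ to $\Res^{\ur}_p$. This CM-refinement is the step I expect to be the main obstacle, since the naive bound from (\ref{ineq}) alone is not sharp enough and one must exploit the behavior of the inertia invariants of $A$ rather than just of $M$.
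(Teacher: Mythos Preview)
Your proposal is correct and follows essentially the same route as the paper: the paper likewise treats Proposition~\ref{st3} as a summary obtained by plugging Propositions~\ref{compss}, \ref{compV}, and \ref{comp} into inequality~(\ref{ineq}) to get $\dim_{\F_p}(\mathrm{Im}(\Res^{\ur}_p))\leqslant 1$, and then invokes Proposition~\ref{cm} for the one residual situation where condition~(i) of Proposition~\ref{comp} holds but conditions~($\mathrm{i}_1$) and~($\mathrm{i}_2$) of Proposition~\ref{st3} do not. Your identification of that residual case and your description of how Proposition~\ref{cm} recovers the lost dimension via the vanishing of $A^{I_p}\otimes\F_p$ match the paper's argument exactly.
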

This completes the proof of Theorem \ref{main} as we explained in Subsection 2.3. 

\section{Numerical examples of Theorem \ref{main}}
We finally introduce some numerical examples of Theorem \ref{main}. To do this, we consider the situations where the inequality 
\begin{eqnarray}\label{gutaire}
\mathrm{dim}_{\F_p}\left( \Sha^{\BK}(\Q, A_{f, D})[p]\right) \geqslant 2
\end{eqnarray}
holds. This inequality implies the condition $\mathrm{dim}_{\F_p}( H_f^1(\Q, M_{f, D})) \geqslant 2$ in Theorem \ref{main} due to the existence of the surjection in (\ref{nume}). We know that the $\F_p$-dimension of $\Sha^{\BK}(\Q, A_{f, D})[p]$ is even because we have the isomorphism (\ref{selfdual}) and the generalized Cassels-Tate pairing for $A_{f, D}$ in \cite{Fl}. Hence, the inequality (\ref{gutaire}) holds if and only if $\Sha^{\BK}(\Q, A_{f, D})$ is just non-trivial. We study when this occurs assuming Bloch and Kato's conjecture.
 \subsection{Ratios of central $L$-values.}
In the following, we assume $N=1$ and the Hecke field $\Q(f)$ of $f$ is $\Q$. Recall that we put $z$ as a variable in the complex upper half plane and $q:=e^{2\pi\sqrt{-1}z}$. For a quadratic discriminant $D$ and a modular form $f(z)=\sum a_nq^n$, we consider the twisted $L$-function of $f$ by the quadratic character $\chi_D$
\[
L(f, \chi_D, s):=\sum_{n=1}^{\infty} \frac{\chi_D(n)a_n}{n^s}.
\]

We consider its central value $L(f, \chi_{D}, k/2)$. Suppose that $L(f, \chi_{D}, k/2) \neq 0$. Then Bloch and Kato's conjecture for $f\otimes \chi_D$ predicts such a value in terms of arithmetic invariants including the order of $\Sha^{\BK}(\Q, A_{f, D})$. The conjecture implies an equality of the $p$-adic valuations 
\begin{eqnarray}\label{bkformula}
v_p\left(\frac{L(f, \chi_{D}, k/2)}{\mathrm{vol}_{\infty}(\chi_{D}, 1-k/2)}\right)= v_p\left(\frac{\# \Sha^{\BK}(\Q, A_{f, D})\cdot c(\Q_p, A_{f, D})}{(\#\Gamma_{\Q}(A_{f, D}))^2}\right).
\end{eqnarray}
Here $\mathrm{vol}_{\infty}(\chi_{D}, 1-k/2)$ denotes a certain transcendental part of the value $L(f, \chi_D, k/2)$, $c(\Q_p, A_{f, D})$ the $p$-part of the Tamagawa factor of $A_{f, D}$ at $p$ and
$\Gamma_{\Q}(A_{f, D}):=H^0(\Q, A_{f, D})$. For the precise definition of $\mathrm{vol}_{\infty}(\chi_{D}, 1-k/2)$, see \cite[Section2]{DSW} for example. We omit the definition of $c(\Q_p, A_{f, D})$ here. Note that the product of Tamagawa factors outside $p$ does not appear above, since we assume $N=1$. Suppose that $f$ satisfies the conditions in Theorem \ref{main}. Then $M_{f, D}=A_{f, D}[p]$ is irreducible as a $G_{\Q}$-module which implies $\#\Gamma_{\Q}(A_{f, D})=1$. Thus, (\ref{bkformula}) yields
\begin{eqnarray}\label{bkformula2}
v_p\left(\frac{L(f, \chi_{D}, k/2)}{\mathrm{vol}_{\infty}(\chi_{D}, 1-k/2)}\right)= v_p\left(\# \Sha^{\BK}(\Q, A_{f, D})\cdot c(\Q_p, A_{f, D})\right).
\end{eqnarray}

For the transcendental factor $\mathrm{vol}_{\infty}(\chi_{D}, 1-k/2)$, we have the following property.
\begin{lem}[\cite{Du}, Lemma 6.1]\label{inf}
For $D > 0$, we have $$\displaystyle \mathrm{vol}_{\infty}(\chi_{D}, 1-k/2)=\frac{\mathrm{vol}_{\infty}(1-k/2)}{\sqrt{D}}.$$ Here $\mathrm{vol}_{\infty}(1-k/2)$ denotes the transcendental part of the L-value $L(f, \mathbf{1}, k/2)$ for the trivial character $\mathbf{1}$.
\end{lem}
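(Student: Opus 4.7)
The plan is to reconstruct the period computation cited from Duke directly from the Deligne-period description of $\mathrm{vol}_{\infty}$ that underlies Bloch and Kato's conjecture. The key observation is that the twisting character $\chi_D$ is an Artin motive of rank one, so the twist $V_{f,D} = V_f \otimes \chi_D$ leaves the Hodge numbers (hence the Hodge filtration piece chosen by the Tate twist $1-k/2$) intact, and the Deligne period is multiplicative on tensor products with such rank-one factors.

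First I would unwind the definition $\mathrm{vol}_{\infty}(\chi_D, 1-k/2) = c^{+}(V_{f,D}(1-k/2))$, where $c^{+}$ is the Deligne period computed as the determinant of the comparison isomorphism from the $+1$-eigenspace of the Betti realization to the de Rham realization modulo the Hodge filtration. By the multiplicativity of $c^{+}$ under tensoring with an Artin motive of parity matching $\chi_D(-1)$, one obtains
\[
c^{+}(V_{f,D}(1-k/2)) = c^{+}(V_f(1-k/2)) \cdot c^{+}(\chi_D) = \mathrm{vol}_{\infty}(1-k/2) \cdot c^{+}(\chi_D).
\]
Second, since $D > 0$ the character $\chi_D$ is even, so $\chi_D(-1) = 1$ and $c^{+}(\chi_D) = 1/\tau(\chi_D)$, where $\tau(\chi_D)$ is the Gauss sum. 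The classical evaluation of quadratic Gauss sums gives $\tau(\chi_D) = \sqrt{D}$ for positive fundamental discriminants, and combining yields the claimed identity.

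The main bookkeeping obstacle will be matching parity conventions: one needs to verify that tensoring by the even character $\chi_D$ sends the $+1$-eigenspace of the Betti realization of $V_f$ onto the $+1$-eigenspace of the Betti realization of $V_{f,D}$, rather than swapping $c^{+}$ and $c^{-}$, and that the Tate twist $1-k/2$ is consistent between the two sides. Both are automatic for $D > 0$ because $\chi_D$ is even and unchanged under complex conjugation of the coefficients; hence no spurious factor of $\sqrt{-1}$ or sign appears, and the only period factor contributed is $1/\sqrt{D}$. An alternative but essentially equivalent route would be to expand $L(f, \chi_D, s)$ via the standard identity $\chi_D(n) = \tau(\chi_D)^{-1}\sum_a \chi_D(a) e^{2\pi i a n/D}$ as a sum of Mellin transforms of $f$ at cusps $a/D$, and track how the factor $\tau(\chi_D)^{-1} = 1/\sqrt{D}$ enters the normalized $L$-value; this would recover the same identity without invoking the motivic formalism.
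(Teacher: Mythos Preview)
The paper does not prove this lemma; it is quoted directly from \cite{Du} (Dummigan, Lemma~6.1) and used as a black box in the computation of ratios of central $L$-values. So there is no in-paper argument to compare against.

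Your sketch is a reasonable reconstruction of how such a statement is typically proved: the multiplicativity of Deligne's periods under twist by a rank-one Artin motive, together with the evaluation $\tau(\chi_D)=\sqrt{D}$ for $D>0$, is exactly the mechanism behind the formula. One point to tighten: the identity $c^{+}(V_{f,D}(1-k/2)) = c^{+}(V_f(1-k/2))\cdot c^{+}(\chi_D)$ as you wrote it is a slight oversimplification. Deligne's formula for the period of a twist by a Dirichlet character involves the Gauss sum raised to the power $d^{+}$, the dimension of the $(+1)$-eigenspace of complex conjugation on the Betti realization; here $d^{+}=1$ for the two-dimensional motive $V_f(1-k/2)$, so the exponent disappears and your formula is correct, but you should say why. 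Your alternative route via the expansion of $\chi_D(n)$ through the Gauss sum and the resulting expression of $L(f,\chi_D,s)$ as a linear combination of period integrals is in fact closer to how Dummigan argues in \cite{Du}, and is arguably cleaner since it avoids any ambiguity about period normalizations.
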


Suppose that we have two positive quadratic discriminants $D$ and $D^{\prime}$ such that neither $L(f, \chi_{D}, k/2)$ nor $L(f, \chi_{D^{\prime}}, k/2)$ vanishes, and the condition
\begin{eqnarray}\label{tamdif}
c(\Q_p, A_{f, D}) = c(\Q_p, A_{f, D^{\prime}})
\end{eqnarray}
is satisfied. A sufficient condition for (\ref{tamdif}) is given in \cite[Lemma 6.3]{Du}, for example. We consider the ratio of $L(f, \chi_{D}, k/2)$ and $L(f, \chi_{D^{\prime}}, k/2)$. Using Lemma \ref{inf} and (\ref{bkformula2}) for $D$ and $D^{\prime}$ together with (\ref{tamdif}), we have 
\begin{eqnarray}\label{L-value}
v_p \left( \frac{\sqrt{D}}{\sqrt{D^{\prime}}}\cdot\frac{L(f, \chi_{D}, k/2)}{L(f, \chi_{D^{\prime}}, k/2)}\right) = v_p\left(\frac{\# \Sha^{\BK}(\Q, A_{f, D})}{\# \Sha^{\BK}(\Q, A_{f, D^{\prime}})}\right).
\end{eqnarray}

On the other hand, the twisted $L$-value $L(f, \chi_{D}, k/2)$ was studied by Kohnen and Zagier in \cite{KZ} in terms of Shimura's theory of modular forms of half integral weight. Shimura's theory gives a correspondence between modular forms of half integral weight and modular forms of even integral weight. We write $S_{k}(\mathrm{SL}_2 (\Z))$ as the space of cusp forms of even weight $k$ on the full modular group $\mathrm{SL}_2 (\Z)$ and $S_{\frac{k+1}{2}}(\Gamma_0(4))$ the space of cusp forms of weight $\frac{k+1}{2}$ on the congruence subgroup $\Gamma_0(4)$. In \cite{K}, Kohnen defined a certain subspace $S^{+}_{\frac{k+1}{2}}(\Gamma_0(4))$ of $S_{\frac{k+1}{2}}(\Gamma_0(4))$ and showed that Shimura's correspondence induces an isomorphism 
\[
\kappa: S^{+}_{\frac{k+1}{2}}(\Gamma_0(4)) \xrightarrow{\sim} S_{k}(\mathrm{SL}_2 (\Z)).
\]
 In \cite[Theorem 1]{KZ}, Kohnen and Zagier gave a formula of the value $L(f, \chi_{D}, k/2)$ for $f \in S_{k}(\mathrm{SL}_2 (\mathbb{Z}))$ in terms of the $|D|$-th Fourier coefficient of $\kappa^{-1}(f)$.
\begin{thm*}[Kohnen-Zagier]
Let $f \in S_{k}(\mathrm{SL}_2 (\Z))$ be a normalized Hecke eigenform and  $g:=\kappa^{-1}(f)=\sum^{\infty}_{n=1} c_n q^n\in S^{+}_{\frac{k+1}{2}}(\Gamma_0(4))$ the inverse image of $f$ under $\kappa$. Let $D$ be a quadratic discriminant with $(-1)^{k/2}D>0$. Then
\[
\frac{c^2_{|D|}}{\langle g, g\rangle}= \frac{(k/2-1)!}{\pi^{k/2}} |D|^{(k-1)/2} \frac{L(f, \chi_D, k/2)}{\langle f, f\rangle},
\]
where $\langle \cdot , \cdot\rangle$ denotes the Petersson inner product.
\end{thm*}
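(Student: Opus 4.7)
The plan is to follow the Rankin--Selberg approach of Kohnen--Zagier, realizing the inverse Shimura correspondence $\kappa^{-1}$ via an explicit theta kernel and computing one Fourier coefficient directly in terms of the twisted $L$-value. First I would introduce the Shintani theta kernel $\Theta(\tau,z)$, a function on the product of two upper half planes which transforms with weight $(k+1)/2$ in $\tau$ under $\Gamma_0(4)$ (and takes values in the Kohnen plus space) and with weight $k$ in $z$ under $\mathrm{SL}_2(\Z)$; it is built as a sum over integral binary quadratic forms weighted by a polynomial enforcing the correct transformation law. Define the Shintani lift $\mathrm{Sh}(f)(\tau):=\langle f,\overline{\Theta(\tau,\cdot)}\rangle_z$. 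This map is a Hecke-equivariant homomorphism $S_k(\mathrm{SL}_2(\Z))\to S^+_{(k+1)/2}(\Gamma_0(4))$, so by multiplicity one in the plus space (Kohnen), $\mathrm{Sh}(f)$ must equal a scalar multiple of $g=\kappa^{-1}(f)$. It therefore suffices to establish the identity with $\mathrm{Sh}(f)$ in place of $g$ and then fix the common normalization.

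The heart of the proof is then to compute $|c_{|D|}(\mathrm{Sh}(f))|^2/\langle \mathrm{Sh}(f),\mathrm{Sh}(f)\rangle$ and identify it with the claimed ratio. Expanding $\Theta$ as a sum over $\mathrm{SL}_2(\Z)$-equivalence classes of quadratic forms of discriminant $D$ and unfolding the Petersson integral against a fundamental domain, $c_{|D|}(\mathrm{Sh}(f))$ becomes a finite sum of cycle integrals of $f$, namely closed geodesic periods when $D>0$ or evaluations at CM points when $D<0$; the sign hypothesis $(-1)^{k/2}D>0$ is exactly the condition that prevents these sums from vanishing identically by a symmetry. Squaring this sum one then unfolds a Rankin--Selberg integral obtained by pairing $|\Theta(\tau,\cdot)|^2$ against a real-analytic Eisenstein series on $\Gamma_0(4)$: on one side the unfolding produces $\langle f,f\rangle \cdot L(f,\chi_D,k/2)$, and on the other side it produces $\langle \mathrm{Sh}(f),\mathrm{Sh}(f)\rangle$ times a sum over Fourier coefficients that collapses (by construction of $\Theta$) to the $|D|$-th coefficient. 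Dividing these two expressions and computing the remaining archimedean integral yields the Gamma factor $(k/2-1)!/\pi^{k/2}$ together with the power $|D|^{(k-1)/2}$.

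The main obstacle will be the precise bookkeeping of normalizing constants. Matching the archimedean integrals that produce $(k/2-1)!/\pi^{k/2}$, tracking the index of $\Gamma_0(4)$ in $\mathrm{SL}_2(\Z)$ together with the local factor at $2$ coming from the plus-space projector, and pinning down the exact power of $|D|$ all require careful comparison of Gauss sums, volumes of fundamental domains, and normalizations of theta series. None of these individual computations is hard in isolation, but assembling them without losing a rational factor, and separately verifying that $\mathrm{Sh}(f)$ and $\kappa^{-1}(f)$ are identified with the \emph{same} scalar (not merely proportional up to an unknown constant), is the delicate part of the argument; this is usually resolved by testing the identity on a single explicit eigenform of low weight where both sides are known.
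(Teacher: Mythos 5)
This statement is not proved in the paper at all: it is quoted verbatim from Kohnen--Zagier \cite[Theorem 1]{KZ} and used as a black box in Section 6 to compute ratios of central $L$-values. So there is no internal proof to compare yours against; I can only assess your sketch on its own terms.

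Your outline correctly identifies the standard strategy (a Shintani-type kernel realizing $\kappa^{-1}$, multiplicity one in the plus space, cycle integrals for the $|D|$-th coefficient, and a Rankin--Selberg unfolding for the norms), which is close in spirit to what Kohnen and Zagier actually do with their holomorphic two-variable kernel built from $(az^2+bz+c)^{-k/2}$ summed over forms of discriminant $D$. But as written it is a plan rather than a proof: every quantitative step --- the exact archimedean integral giving $(k/2-1)!/\pi^{k/2}$, the power $|D|^{(k-1)/2}$, the index and Gauss-sum bookkeeping at $2$ --- is deferred, and these constants are the entire content of the theorem. More seriously, your proposed fix of ``testing the identity on a single explicit eigenform of low weight'' cannot pin down the constant, because the constant visibly depends on the weight $k$ (through the Gamma factor and the exponent of $|D|$); a check in one weight says nothing about the others unless you have already proved the constant's shape as a function of $k$, which is exactly the computation being avoided. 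Finally, the scalar-normalization worry you flag at the end is a non-issue: the left-hand side $c_{|D|}^2/\langle g,g\rangle$ is invariant under rescaling $g$ by a real scalar (and $g$ can be taken with real coefficients), so mere proportionality $\mathrm{Sh}(f)\propto\kappa^{-1}(f)$ from multiplicity one suffices --- but this only shifts the full burden onto the unfolding computation, which your proposal does not carry out.
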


Now, for example we put 
$$f=\Delta(z):=\sum^{\infty}_{n=1} \tau_n q^n 
\quad  (q=e^{2\pi i z}),
$$
Ramanujan's cusp form of weight $k=12$ and level $N=1$. We take two positive quadratic discriminants $D$ and $D^{\prime}$ such that $L(\Delta, \chi_{D}, 6), L(\Delta, \chi_{D^{\prime}}, 6)\neq 0$, and they satisfy (\ref{tamdif}). Using the theorem of Kohnen and Zagier, we obtain an equality
\[
\frac{\sqrt{D}}{\sqrt{D^{\prime}}}\cdot\frac{L(\Delta, \chi_{D}, 6)}{L(\Delta, \chi_{D^{\prime}}, 6)} = \frac{c^2_D}{c^2_{D^{\prime}}}\cdot \left({\frac{D^{\prime}}{D}}\right)^5,
\]
where $c_{D}, c_{D^{\prime}}$ are the $D$-th and $D^{\prime}$-th Fourier coefficients of $\kappa^{-1}(\Delta(z))$ respectively. From (\ref{L-value}), we obtain an equality of $p$-adic valuations
\begin{eqnarray}\label{Sha}
v_p\left(\frac{\# \Sha^{\BK}(\Q, A_{\Delta, D})}{\# \Sha^{\BK}(\Q, A_{\Delta, D^{\prime}})}\right)= v_p\left(\frac{c^2_D}{c^2_{D^{\prime}}}\cdot \left({\frac{D^{\prime}}{D}}\right)^5\right).
\end{eqnarray}
Therefore, we can deduce $\Sha^{\BK}(\Q, A_{\Delta, D})\neq 0$ under the Bloch-Kato conjecture for $\Delta\otimes\chi_{D}$ and $\Delta\otimes\chi_{D^{\prime}}$ if there exists a pair $(D, D^{\prime})$ such that the right-hand side of (\ref{Sha}) is positive. 

On the other hand, the main example in \cite{KZ} provides a formula 
\begin{eqnarray}\label{kz}
\displaystyle \kappa^{-1}(\Delta(z))=\frac{60}{2\pi i }\left(2G_4 (4z)\theta^{\prime}(z)-G^{\prime}_4(4z)\theta(z)\right),
\end{eqnarray}
 where $$G_4(z):=\frac{1}{240}+\sum^{\infty}_{n=1} \sigma_3(n) q^n \quad (\sigma_3(n):=\sum_{0<d|n} d^3),$$
 $$\theta(z):=1+2\sum^{\infty}_{n=1} q^{n^2}.$$
 Note that $\theta^{\prime}(z)$ and $G^{\prime}_4(4z)$ denote the derivatives with respect to $z$. Thus, we can compute the Fourier coefficients of $\kappa^{-1}(\Delta(z))$ with (\ref{kz}) explicitly and hence the right-hand side of (\ref{Sha}).

\subsection{Examples}
In this subsection, we assume the Bloch-Kato conjecture and give two application examples of Theorem \ref{main}. 
\begin{prop}\label{exam1}
We set $p=11$, $f(z)=\Delta(z)$, $D=517$ and $D^{\prime}=33$, and assume the Bloch-Kato conjecture for $\Delta \otimes \chi_{517}$ and $\Delta \otimes \chi_{33}$. Then there exists a surjective $\Gal(K_{\Delta, 517}/\Q)$-equivariant homomorphism from $\cl(K_{\Delta, 517})\otimes\F_{11} $ to $M_{\Delta, 517}$.
\end{prop}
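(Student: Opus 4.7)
The plan is to apply Theorem \ref{main} with $(p, f, D) = (11, \Delta, 517)$. This requires verifying the structural hypotheses (A)--(D) of that theorem and establishing the dimension bound $\dim_{\F_{11}} H_f^1(\Q, M_{\Delta, 517}) \geq 2$; the auxiliary discriminant $D' = 33$ serves to produce the latter via the ratio formula (\ref{Sha}).

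Conditions (A) and (D) are immediate because $N = 1$. For (C), I would appeal to the classical fact that the exceptional primes for the mod-$p$ Galois representation attached to $\Delta$ are exactly $\{2, 3, 5, 7, 23, 691\}$, so at $p = 11$ the image of $\bar\rho_\Delta^0$ contains $\mathrm{SL}_2(\F_{11})$. For (B): since $\tau(11) \equiv 1 \pmod{11}$, the form $\Delta$ is ordinary at $11$, and $p - 1 = 10 \nmid 11 = k - 1$ as required. Moreover, because $517 = 11 \cdot 47$ satisfies $11 \mid D$ but $D \neq p^\ast = -11$, the residual representation $M_{\Delta, 517}$ falls into Case~2 of the proof of Proposition \ref{comp}, so $M^{G_{\Q_{11}}} = 0$ automatically and none of the exceptional configurations listed in Proposition \ref{st3} can occur.

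For the dimension bound, the surjection (\ref{nume}) reduces matters to showing $\dim_{\F_{11}} \Sha^{\BK}(\Q, A_{\Delta, 517})[11] \geq 2$. By the self-duality (\ref{selfdual}) and Flach's generalized Cassels--Tate pairing, this $\F_{11}$-dimension is even, so it suffices to show that the $11$-part of $\Sha^{\BK}(\Q, A_{\Delta, 517})$ is non-trivial. Under the assumed Bloch--Kato conjecture for $\Delta \otimes \chi_{517}$ and $\Delta \otimes \chi_{33}$, this follows from (\ref{Sha}) once I establish: (i) both $L(\Delta, \chi_{517}, 6)$ and $L(\Delta, \chi_{33}, 6)$ are non-zero; (ii) the Tamagawa equality $c(\Q_{11}, A_{\Delta, 517}) = c(\Q_{11}, A_{\Delta, 33})$ holds, which I would verify via the sufficient condition in \cite[Lemma 6.3]{Du}; and (iii) the right-hand side of (\ref{Sha}) has strictly positive $11$-adic valuation.

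The main obstacle is the explicit calculation of the Fourier coefficients $c_{33}$ and $c_{517}$ of $\kappa^{-1}(\Delta(z))$, which one extracts from the product $2G_4(4z)\theta'(z) - G_4'(4z)\theta(z)$ appearing in (\ref{kz}). Since $v_{11}(33) = v_{11}(517) = 1$, the factor $(33/517)^5$ is an $11$-adic unit, so the positivity required in (iii) reduces to the strict inequality $v_{11}(c_{517}) > v_{11}(c_{33})$. The plan therefore finishes with a finite computer-algebra computation that expands $\kappa^{-1}(\Delta(z))$ to order at least $517$, reads off $c_{33}$ and $c_{517}$, and confirms simultaneously the non-vanishing in (i) and the strict $11$-adic gap in (iii).
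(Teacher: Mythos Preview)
Your proposal is correct and follows essentially the same route as the paper: verify (A)--(D) for Theorem~\ref{main} (with the same justifications, including the exceptional-prime list for $\Delta$ and the observation that $517$ is a proper multiple of $11$ so Case~2 of Proposition~\ref{comp} applies), then use the Kohnen--Zagier formula (\ref{kz}) to compute $c_{33}$ and $c_{517}$ and feed them into (\ref{Sha}). The paper simply carries out the final computation explicitly, obtaining $c_{33}=-2^4\cdot 3^4\cdot 5$ and $c_{517}=2^4\cdot 3\cdot 5\cdot 11\cdot 19697$, which confirms both the non-vanishing and the strict $11$-adic gap you identified.
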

\begin{proof}
First, we check the conditions $(A)$, $(B)$, $(C)$ and $(D)$ in Theorem \ref{main} hold for this setting. Since $N=1$, $(A)$ and $(D)$ are automatically satisfied. The prime number $p=11$ is a good ordinary prime of $\Delta(z)$, and $D=517$ and $D^{\prime}=33$ are proper multiples of $11$. Therefore, $(B)$ holds. It is known that the image of $\bar{\rho}^0_{\Delta}$ contains $\mathrm{SL}_2(\F_p)$ except for the cases $p=2, 3, 5, 7, 23$ and $691$, hence $(C)$ holds. 

We can also check that the condition (\ref{tamdif}) is satisfied for $D=517$ and $D^{\prime}=33$ by \cite[Lemma 6.3]{Du}.

 Next, we show $\Sha^{\BK}(\Q, A_{\Delta, 517})\neq 0$ under the Bloch-Kato conjecture. By (\ref{kz}), we compute the $11i$-th Fourier coefficient of $\kappa^{-1}(\Delta(z))=\sum^{\infty}_{n=1} c_n q^n$ for $2\leqslant i \leqslant 47$ as follows:
\begin{center}
\begin{longtable}[h]{|c|c||c|}
\hline
$i$ & $11i$ & $c_{11i}$ \\
\hline
\endfirsthead
\multicolumn{3}{c}%
{\tablename\ \thetable\ -- \textit{Continued from previous page}} \\
\hline
$i$ & $11i$ & $c_{11i}$ \\
\hline
\endhead
\hline \multicolumn{3}{c}{\textit{Continued on next page}} \\
\endfoot
\hline
\endlastfoot
     $2$ & $22$ & 0    \\ \hline
        $3$ & $\mathbf{33}$  & $-6480=-2^4 \cdot 3^4 \cdot5$     \\ \hline
         $4$ & $44$ &  $-43680=-2^5\cdot 3 \cdot 5 \cdot 7 \cdot 13$   \\ \hline
    $5$& $55$ & $0$ \\ \hline
     $6$ & $66$& $0$ \\ \hline
     $7$ & $77$& $110880=2^5\cdot 3^2\cdot 5 \cdot 7 \cdot 11$ \\ \hline
     $8$ & $88$& $-153120=2^5 \cdot 3 \cdot 5 \cdot 11 \cdot 29$ \\ \hline
     $\vdots$ & $\vdots$& $\vdots$ \\ \hline
          $40$ & $440$& $-25903680=-2^6\cdot 3 \cdot 5 \cdot 11 \cdot 223$ \\ \hline
     $41$ & $451$& $0$ \\ \hline
     $42$ & $462$& $0$ \\ \hline
     $43$ & $473$& $-6850800=-2^4 \cdot 3^2 \cdot 5^2 \cdot 11 \cdot 173$ \\ \hline
     $44$ & $484$& $-20919416=-2^3\cdot 7 \cdot 373561$ \\ \hline
     $45$ & $495$& $0$ \\ \hline
     $46$ & $506$& $0$ \\ \hline
      $47$ & $\mathbf{517}$& $52000080=2^4\cdot 3\cdot 5 \cdot  \mathbf{11} \cdot 19697$ \\ \hline

     \end{longtable} 
\end{center}\vspace{-8mm}
Thus, using (\ref{Sha}), we obtain 
\begin{eqnarray*}
v_{11}\left(\frac{\# \Sha^{\BK}(\Q, A_{\Delta, 517})}{\# \Sha^{\BK}(\Q, A_{\Delta, 33})}\right)=v_{11}\left(\frac{c^2_{517}}{c^2_{33}}\cdot \left({\frac{3}{47}}\right)^5\right)=v_{11}\left(\frac{52000080^2\cdot 3^5}{6480^2\cdot 47^5}\right)>0.
\end{eqnarray*}
Hence, $\Sha^{\BK}(\Q, A_{\Delta, 517})\neq 0$ which implies $\mathrm{dim}_{\F_{11}}(\Sha^{\BK}(\Q, A_{\Delta, 517})[11]) \geqslant 2$.  Applying Theorem \ref{main}, we can see that there exists a surjective $\Gal(K_{\Delta, 517}/\Q)$-equivariant homomorphism $\cl(K_{\Delta, 517})\otimes\F_{11} \twoheadrightarrow M_{\Delta, 517}$. 
\end{proof}

\begin{rem}
 We note that the representation $M_{\Delta, 517}$ in Proposition \ref{exam1} comes from an elliptic curve over $\mathbb{Q}$. Put $E$ as the modular curve $X_0(11)$ and $f_E = \sum^{\infty}_{n=1} a_n q^n \in S_2(\Gamma_0(11))$ the corresponding rational cusp form. For $\Delta(z)=\sum^{\infty}_{n=1} \tau_n q^n$ and $f_E$, we have the congruences of coefficients $\tau_n \equiv a_n \pmod{11}$ which induces an isomorphism between Galois representations $M^0_{\Delta}$ and $E[11]$. Thus, we have an isomorphism $M_{\Delta, 517} \cong E_{517}[11]\otimes \omega_{\cyc}^{5}$ where $E_{517}$ denotes the quadratic twist of $E$ by $517$. We note here that $E_{517}$ has bad reduction at $11$. Therefore, we can not treat this representation $M_{\Delta, 517} \cong E_{517}[11]\otimes \omega_{\cyc}^{5}$ by the previous work of Prasad and Shekhar \cite{PS} since their result deals with an elliptic curve and its good prime $p$.
\end{rem} 
\begin{prop}\label{exam2}
We set $p=67$, $f(z)=\Delta(z)$, $D=2881$ and $D^{\prime}=201$, and assume the Bloch-Kato conjecture for $\Delta\otimes \chi_{2881}$ and $\Delta \otimes \chi_{201}$. Then there exists a surjective $\Gal(K_{\Delta, 2881}/\Q)$-equivariant homomorphism from $\cl(K_{\Delta, 2881})\otimes\F_{67}$ to $M_{\Delta, 2881}$.
\end{prop}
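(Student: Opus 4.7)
The plan is to follow the template of the proof of Proposition \ref{exam1} almost verbatim, with only the choice of auxiliary discriminant and the numerical input changing.

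First I would verify the four hypotheses of Theorem \ref{main}. Since $N = 1$, conditions (A) and (D) are automatic. Condition (C) holds because the image of $\bar{\rho}^{0}_{\Delta}$ is known to contain $\mathrm{SL}_2(\F_p)$ for every prime $p \notin \{2, 3, 5, 7, 23, 691\}$, and $67$ is not exceptional. For (B), I would invoke the fact that $67$ is a good ordinary prime of $\Delta$, so that $p - 1 = 66$ does not divide $k - 1 = 11$, and then check that none of the exceptional cases $(\mathrm{i}_1), (\mathrm{i}_2), (\mathrm{ii}), (\mathrm{iii}), (\mathrm{iv})$ in Proposition \ref{st3} occurs. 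Since $D = 2881 = 43 \cdot 67$ satisfies $p \mid D$ and $D \neq p^{\ast} = -67$, we lie in Case 2 of the proof of Proposition \ref{comp}, where $M^{G_{\Q_p}} = 0$ already holds unconditionally, and none of the exceptional conditions applies.

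Next I would verify condition (\ref{tamdif}), that $c(\Q_{67}, A_{\Delta, 2881}) = c(\Q_{67}, A_{\Delta, 201})$, by the same application of \cite[Lemma 6.3]{Du} used in Proposition \ref{exam1}; both $2881$ and $201$ are positive, squarefree, $\equiv 1 \pmod{4}$, and divisible by $67$, so the hypotheses of that lemma apply identically. The numerical heart of the argument is then an explicit evaluation of the Fourier coefficients $c_{201}$ and $c_{2881}$ of $\kappa^{-1}(\Delta(z)) = \sum_{n \geq 1} c_n q^n$, extracted from the closed formula (\ref{kz}) for $\kappa^{-1}(\Delta)$ in terms of $\theta$, $G_4$ and their $z$-derivatives. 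Both indices are $\equiv 1 \pmod{4}$, so the coefficients lie in the support of the Kohnen plus space. The goal is to verify
\[
v_{67}\!\left(\frac{c_{2881}^{\,2}}{c_{201}^{\,2}} \cdot \left(\frac{201}{2881}\right)^{\!5}\right) = v_{67}\!\left(\frac{c_{2881}^{\,2} \cdot 3^{5}}{c_{201}^{\,2} \cdot 43^{5}}\right) > 0,
\]
equivalently $v_{67}(c_{2881}) > v_{67}(c_{201})$. By formula (\ref{Sha}) and the assumed Bloch-Kato conjecture this forces $\Sha^{\BK}(\Q, A_{\Delta, 2881}) \neq 0$; combined with the evenness of $\dim_{\F_{67}} \Sha^{\BK}(\Q, A_{\Delta, 2881})[67]$ (from the self-duality (\ref{selfdual}) and Flach's generalized Cassels-Tate pairing), this gives $\dim_{\F_{67}} \Sha^{\BK}(\Q, A_{\Delta, 2881})[67] \geq 2$, and then (\ref{nume}) yields $\dim_{\F_{67}} H^1_f(\Q, M_{\Delta, 2881}) \geq 2$. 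Theorem \ref{main} then delivers the required $\Gal(K_{\Delta, 2881}/\Q)$-equivariant surjection $\cl(K_{\Delta, 2881}) \otimes \F_{67} \twoheadrightarrow M_{\Delta, 2881}$.

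The only genuine obstacle is the Fourier coefficient computation itself: one needs the $q$-expansion of $2 G_4(4z) \theta'(z) - G_4'(4z) \theta(z)$ out to order $2881$, which is routine in a computer algebra system but infeasible by hand. If the valuation inequality failed for this particular pair $(D, D') = (2881, 201)$, the remedy would be to search for a different auxiliary $D'$ (still a positive fundamental discriminant divisible by $67$ and satisfying (\ref{tamdif})) giving a positive valuation; nothing in the structural part of the argument would change.
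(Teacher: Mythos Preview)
Your proposal is correct and follows essentially the same approach as the paper's own proof. The only minor divergence is that the paper verifies (\ref{tamdif}) via \cite[Lemma 4.6]{DSW} (using $p=67>k=12$ to conclude $c(\Q_{67}, A_{\Delta, 2881}) = c(\Q_{67}, A_{\Delta, 201}) = 1$ outright) rather than \cite[Lemma 6.3]{Du}, and it supplies the explicit factorizations $c_{201} = -2^4\cdot 3^2\cdot 5\cdot 7\cdot 13\cdot 41$ and $c_{2881} = -2^4\cdot 3\cdot 5\cdot 67\cdot 71\cdot 2297$, confirming $v_{67}(c_{2881}) = 1 > 0 = v_{67}(c_{201})$.
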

\begin{proof}
First, we check the conditions $(A)$, $(B)$, $(C)$ and $(D)$ in Theorem \ref{main} hold for this setting. Since $N=1$, $(A)$ and $(D)$ hold. The prime number $p=67$ is a good ordinary prime of $\Delta(z)$, and $D=2881$ and $D^{\prime}=201$ are proper multiples of $67$. Hence, we can see that $(B)$ holds. As we noted in the proof of Proposition \ref{exam1}, the condition $(C)$ also holds.

Since $p=67>k=12$, we can also check that $c(\Q_{67}, A_{\Delta, 2881}) = c(\Q_{67}, A_{\Delta, 201})=1$ by \cite[Lemma 4.6]{DSW}, and the condition (\ref{tamdif}) is satisfied for $D=2881$ and $D^{\prime}=201$.

Next, we show $\Sha^{\BK}(\Q, A_{\Delta, 2881})\neq 0$ under the Bloch-Kato conjecture. For $i \in \mathbb{Z}$ with $2\leqslant i \leqslant 43$, the $67i$-th Fourier coefficient of $\kappa^{-1}(\Delta(z))=\sum^{\infty}_{n=1} c_n q^n$ is computed as follows:

\begin{center}
\begin{longtable}{|c|c||c|}
\hline
$i$ & $67i$ & $c_{67i}$ \\
\hline
\endfirsthead
\multicolumn{3}{c}
{\tablename\ \thetable\ -- \textit{Continued from previous page}} \\
\hline
$i$ & $67i$ & $c_{67i}$ \\
\hline
\endhead
\hline \multicolumn{3}{c}{\textit{Continued on next page}} \\
\endfoot
\hline
\endlastfoot
     $2$ & $134$ & 0    \\ \hline
        $3$ & $\mathbf{201}$  & $-2686320=-2^4\cdot 3^2 \cdot 5 \cdot 7 \cdot 13 \cdot 41$     \\ \hline
         $4$ & $268$  &  $-4016160=-2^5\cdot 3^2\cdot 5\cdot 2789$   \\ \hline
    $5$ & $335$ & 0 \\ \hline
    $6$ & $402$ & 0 \\ \hline
     $7$ & $469$ & $-32215680=2^7 \cdot 3^2 \cdot 5 \cdot 7 \cdot 17 \cdot 47$ \\ \hline
      $8$ & $536$& $24612000=2^5 \cdot 3 \cdot 5^3 \cdot 7 \cdot 293$ \\ \hline
       $\vdots$ & $\vdots$ & \vdots \\ \hline
         $36$& $2412$ & $-36145440=-2^5 \cdot 3^4 \cdot 5 \cdot 2789$ \\ \hline
    $37$ & $2479$& 0 \\ \hline
    $38$ & $2546$ & 0 \\ \hline
    $39$ & $2613$& $-981246240=-2^5 \cdot 3^2 \cdot 5 \cdot 13 \cdot 23 \cdot 43 \cdot 53$ \\ \hline
     $40$ &$2680$& $3359129280=2^6 \cdot 3 \cdot 5 \cdot 13 \cdot 17 \cdot 71 \cdot 223$   \\ \hline
    $41$&$2747$ & 0   \\ \hline
     $42$&$2814$ & 0    \\ \hline
        $43$&$\mathbf{2881}$  & $-2622438960=-2^4 \cdot 3 \cdot 5 \cdot \mathbf{67} \cdot 71 \cdot 2297$     \\ \hline
    \end{longtable}
\end{center}\vspace{-5mm}

Using (\ref{Sha}) for $D=2881$ and $D^{\prime}=201$, we obtain
\begin{eqnarray*}\label{67}
v_{67} \left(\frac{\# \Sha^{\BK}(\Q, A_{\Delta, 2881})}{\# \Sha^{\BK}(\Q, A_{\Delta, 201})}\right)=v_{67}\left( \frac{c^2_{2881}}{c^2_{201}}\cdot \left({\frac{3}{43}}\right)^5\right)=v_{67}\left(\frac{2622438960^2\cdot 3^5}{2686320^2\cdot 43^5}\ \right)>0.
\end{eqnarray*}
Thus, we have $\Sha^{\BK}(\mathbb{Q}, A_{\Delta, 2881})\neq 0$ to obtain 
$$\mathrm{dim}_{\mathbb{F}_{67}}(\Sha^{\BK}(\mathbb{Q}, A_{\Delta, 2881})[67]) \geqslant 2.$$
Hence, Theorem \ref{main} implies the existence of a surjective $\Gal(K_{\Delta, 2881}/\Q)$-equivariant homomorphism $\cl(K_{\Delta, 2881})\otimes\F_{67} \twoheadrightarrow M_{\Delta, 2881}$.
\end{proof}

Moreover, we can show the following property for this example.
\begin{prop}\label{konai}
The Galois representation $M_{\Delta, 2881}$ in Proposition {\rm \ref{exam2}} does not come from any elliptic curve over $\Q$. In other words, there does not exist any elliptic curve $E$ over $\Q$ such that 
\[
M_{\Delta, 2881}\simeq E[67]\otimes \omega_{\cyc}^i\ \ \ (0\leqslant i\leqslant 65),
\]
where $\omega_{\cyc}$ denotes the mod $67$ cyclotomic character.
\end{prop}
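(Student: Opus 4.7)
The plan is to argue by contradiction: suppose $M_{\Delta, 2881} \simeq E[67] \otimes \omega_{\cyc}^i$ for some elliptic curve $E/\Q$ and some $0 \leqslant i \leqslant 65$, and derive a contradiction by comparing both sides after restriction to the inertia subgroup $I_{67}$.

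The first step is to compare determinants. From $\det\bar{\rho}^0_\Delta = \omega_{\cyc}^{k-1} = \omega_{\cyc}^{11}$ and the twists by $\omega_{\cyc}^{1-k/2} = \omega_{\cyc}^{-5}$ and by the quadratic character $\chi_{2881}$ (whose square is trivial), we obtain $\det M_{\Delta, 2881} = \omega_{\cyc}$. The Weil pairing gives $\det E[67] = \omega_{\cyc}$, hence $\det(E[67]\otimes\omega_{\cyc}^i) = \omega_{\cyc}^{1+2i}$. Equating forces $2i \equiv 0 \pmod{p-1=66}$, so $i \in \{0, 33\}$.

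The second step is to compute the semisimplified inertia characters of $M_{\Delta, 2881}$ at $67$. Since $\Delta$ is ordinary at $67$, the standard local description of ordinary modular mod $p$ representations gives $(\bar{\rho}^0_\Delta|_{I_{67}})^{\mathrm{ss}} \simeq \omega_{\cyc}^{11} \oplus 1$. Factoring $\chi_{2881} = \chi_{-43}\chi_{-67}$, the character $\chi_{-43}$ is unramified at $67$ and $\chi_{-67}|_{I_{67}}$ equals the unique order-$2$ character $\omega_{\cyc}^{(p-1)/2} = \omega_{\cyc}^{33}$ of $I_{67}$, so $\chi_{2881}|_{I_{67}} = \omega_{\cyc}^{33}$. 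Combining with the twist by $\omega_{\cyc}^{-5}$ gives inertia characters $\omega_{\cyc}^{28}$ and $\omega_{\cyc}^{39}$ on $M_{\Delta, 2881}$.

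The main step, and the main obstacle, is to rule out the pair $\{\omega_{\cyc}^{28}, \omega_{\cyc}^{39}\}$ as the inertia characters of $E[67]\otimes\omega_{\cyc}^i$ at $67$ for any $E/\Q$ and $i \in \{0, 33\}$. The plan is to stratify by the reduction type of $E/\Q_{67}$. For good ordinary or multiplicative reduction, the inertia characters of $E[67]$ are $\{\omega_{\cyc}, 1\}$, so after twisting they become $\{\omega_{\cyc}^{i+1}, \omega_{\cyc}^{i}\}$, giving $\{\omega_{\cyc}, 1\}$ or $\{\omega_{\cyc}^{34}, \omega_{\cyc}^{33}\}$ but never $\{\omega_{\cyc}^{28}, \omega_{\cyc}^{39}\}$. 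For (potentially) supersingular reduction the characters are fundamental of level $2$, which cannot match level-$1$ characters even after a level-$1$ twist. The delicate case is additive, potentially ordinary or potentially multiplicative reduction: here $E$ attains semistable reduction over a tamely ramified extension of degree dividing $12$ (using $p = 67 \geqslant 5$), so the inertia characters have the form $\{\omega_{\cyc}\zeta, \zeta^{-1}\}$ with $\zeta$ of order dividing $\gcd(12, p-1) = 6$, i.e., $\zeta = \omega_{\cyc}^a$ with $11 \mid a$. A short enumeration of the two choices of $i$ and the two matchings of characters shows that $a$ would have to lie in $\{5, 27, 38, 60\} \pmod{66}$, none divisible by $11$, yielding the required contradiction.
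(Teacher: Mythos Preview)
Your proof is correct but follows a genuinely different route from the paper's.

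The paper first untwists and reduces the problem to showing that $M^0_\Delta$ is not isomorphic to $E[67]\otimes\omega_{\cyc}^i$ for any $E$ and any $i$. It then compares the semi-simplified inertia characters $\{\omega_{\cyc}^{11},1\}$ of $M^0_\Delta$ with those of $E[67]\otimes\omega_{\cyc}^i$: for good ordinary or multiplicative reduction one gets $\{\omega_{\cyc}^{i+1},\omega_{\cyc}^i\}$, and the exponents never differ by $11$; for additive reduction the paper passes to a totally ramified extension $L'$ of degree $2$, $4$, or $6$ where $E$ becomes semistable and repeats the comparison over $\Gal(L'\cdot F(M^0_\Delta)/L')$, leaving the verification to the reader.

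By contrast, you keep the twist by $\chi_{2881}$ and $\omega_{\cyc}^{-5}$ in place, use the determinant to cut $i$ down to $\{0,33\}$ at the outset, and then parametrize the possible inertia characters of $E[67]$ directly as $\{\omega_{\cyc}\zeta,\zeta^{-1}\}$ with $\mathrm{ord}(\zeta)\mid 6$, finishing with a short explicit enumeration. Your treatment of the additive case is more transparent than the paper's sketch: the bound $\mathrm{ord}(\zeta)\mid\gcd(12,p-1)=6$ is exactly what is needed, and the four candidate exponents $\{5,27,38,60\}$ visibly avoid the multiples of $11$. The paper's untwisting has the minor advantage of not requiring the local decomposition $\chi_{2881}|_{I_{67}}=\omega_{\cyc}^{33}$, but your argument is otherwise more self-contained.
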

\begin{proof}
It suffices to show that there does not exist any elliptic curve $E$ over $\Q$ such that
\[
M^0_{\Delta}\simeq E[67]\otimes \omega_{\cyc}^i\ \ \ (0\leqslant i\leqslant 65).
\]

We assume the existence of such an elliptic curve $E$. Since $\Delta(z)$ is good ordinary at $67$, an element $g$ in the inertia subgroup $I_{67}$ at 67 acts on $M^0_{\Delta}$ as 
\[
 \left(
 \begin{array}{cc}
 \omega_{\cyc}^{11}(g) &\bar{u}(g) \\
 0&1
 \end{array}
 \right),
\]
where $\bar{u}(g)$ is as in (\ref{actM}). Putting $F(M^0_{\Delta})$ as a field which corresponds to the kernel of $\bar{\rho}^0_{\Delta}\mid_{I_{67}}$, we can see that the semi-simplification $(M_{\Delta}^0)^{\mathrm{ss}}$ of $M^0_{\Delta}$ as an $\F_{67}[\Gal(F(M^0_{\Delta})/\Q_{67}^{\ur})]$-module is 
\begin{eqnarray}\label{ss1}
\F_{67}(\omega_{\cyc}^{11})\oplus \F_{67}.
\end{eqnarray}

First, suppose that $E$ has good reduction at $67$. Since $M_{\Delta}^0$ is reducible when restricted to $G_{\Q_{67}}$, $E$ has good ordinary reduction at $67$. Hence, $g \in I_{67}$ acts on $E[67]\otimes \omega_{\cyc}^i$ as 
\[
 \left(
 \begin{array}{cc}
 \omega_{\mathrm{cyc}}(g) &\bar{v}(g) \\
 0&1
 \end{array}
 \right)\cdot \omega_{\cyc}^{i},
\]
where $\bar{v}(g) \in \mathbb{F}_{67}$. Then we have another description of 
$(M_{\Delta}^0)^{\mathrm{ss}}$ as 
\begin{eqnarray}\label{ss2}
\F_{67}(\omega_{\cyc}^{i+1})\oplus\F_{67}(\omega_{\cyc}^i).
\end{eqnarray}
However, (\ref{ss1}) never coincides with (\ref{ss2}) for any $i$. This is a contradiction.

Next, we suppose that $E$ has bad reduction at $67$. When $E$ has multiplicative reduction at $67$, the theory of the Tate curve says that $g \in I_{67}$ acts on $E[67]$ via  
$
 \left(
 \begin{array}{cc}
  \omega_{\mathrm{cyc}}(g) &\bar{w}(g) \\
 0&1
 \end{array}
 \right),
$
where $\bar{w}(g) \in \F_{67}$. Hence, we get the same conclusion as in the case where $E$ has good reduction at $67$. When $E$ has additive potentially multiplicative reduction, it is known that $E$ acquires split multiplicative reduction over a ramified quadratic extension $L$ over $\Q_{67}^{\ur}$. Thus, we can also treat this case as the good reduction case, studying the semi-simplification of $M_{\Delta}^0$ as an $\F_{67}[\Gal(L\cdot F(M^0_{\Delta})/L)]$-module.

Finally, we assume that $E$ has additive potentially good reduction at $67$. It is a well-known fact that $E$ acquires good reduction over a totally ramified extension $L^{\prime}$ of degree 4 or 6 over $\Q_{67}^{\ur}$. See \cite{Kr} for more details. Thus, we can deduce a contradiction as the good reduction case, studying the semi-simplification of $M_{\Delta}^0$ as an $\F_{67}[\Gal(L^{\prime}\cdot F(M^0_{\Delta})/L^{\prime})]$-module.
\end{proof}

\begin{rem}
In the above Examples \ref{exam1} and \ref{exam2}, it seems difficult to compute the ideal class groups $\cl(K_{\Delta, 517})$ and $\cl(K_{\Delta, 2881})$ without assuming the Bloch-Kato conjecture. For example, it is difficult to calculate them by machine computation since the extension degree $[K_{\Delta, D} : \Q]$ is huge in general. In fact, the Galois group $\Gal(K_{\Delta, D}/\Q)$ contains $\mathrm{SL}_2(\F_p)$ due to the assumption $(C)$ in Theorem \ref{main}. Hence, $[K_{\Delta, D} : \Q]$ is greater than or equal to $\#\mathrm{SL}_2(\F_p)=(p-1)p(p+1)$. 
\end{rem}

\subsection*{Acknowledgement}
The author would like to thank his supervisor Masato Kurihara heartily for his continued support, guiding the author to the topic in this paper and helpful discussions. He would also like to thank Dipendra Prasad and Sudhanshu Shekhar for valuable comments on his previous paper \cite{D}. Thanks are also due to Ryotaro Sakamoto for providing a number of valuable comments. He is also grateful to Neil Dummigan for introducing the paper \cite{Du} kindly to him. Finally, he thanks deeply the anonymous referees for careful reading of his manuscript. This research was supported by JSPS KAKENHI Grant Number 21J13502.

\end{document}